\providecommand{\U}[1]{\protect\rule{.1in}{.1in}}
\providecommand{\U}[1]{\protect \rule{.1in}{.1in}}
\newtheorem{theorem}{Theorem}[section]
\newtheorem{corollary}[theorem]{Corollary}
\newtheorem{lemma}[theorem]{Lemma}
\newtheorem{proposition}[theorem]{Proposition}
\newtheorem{remark}[theorem]{Remark}
\numberwithin{equation}{section}
\begin{document}

\title{Bounds for moments of quadratic Dirichlet character sums of prime moduli }

\author{Yuetong Zhao
\vspace*{-4mm} \\
                    $\textrm{\small School of Mathematical Sciences, Beihang University}$
                     \vspace*{-4mm} \\
     \small  Beijing 100191, P. R. China
}

\footnotetext
   { \textit{ E-mail addresses}:
     \href{mailto:yuetong.zhao.math@gmail.com}{yuetong.zhao.math@gmail.com}
    }
    \date{}
\maketitle

{\textbf{Abstract}}: Assuming the generalized Riemann hypothesis, we evaluate sharp upper bounds for the shifted moments of quadratic Dirichlet $L$-functions with moduli 8$p$, where $p$ ranges over odd primes.  We then apply this result to prove bounds for the moments of quadratic Dirichlet character sums with prime moduli.

{\textbf{Mathematics Subject Classification (2010)}}: 11L40, 11M06

{\textbf{Keywords}}: quadratic Dirichlet characters; quadratic Dirichlet $L$-functions; prime moduli; shifted moments




\section{Introduction}
Let $\chi^{(d)}=\big(\frac{d}{\cdot}\big)$ be a real primitive Dirichlet character modulo $d$ given by Kronecker symbol.
In 1981, Jutila \cite{J3} investigated the first and second moment of the family of quadratic Dirichlet $L$-functions $L(1/2,\chi^{(d)})$
for $d$ running over fundamental discriminants and provided the asymptotic formulas for these moments. In 2000, applying random matrix theory, Keating and Snaith \cite{KS} conjectured that for any positive real number $k$,
\begin{equation*}
    \sum_{|d|\leq X}L(1/2,\chi^{(d)})^k\sim C_kX(\log X)^{\frac{k(k+1)}{2}},
\end{equation*}
where $C_k$ are explicit constants. In 2009, Soundararajan \cite{Sound} established upper bounds for moments of $L$-functions that are close to the predicted result under the generalized Riemann hypothesis (GRH). In 2013, Harper \cite{H} provided a refinement to obtain sharp upper bounds for the moments of $L$-functions conditionally. Instead of considering the values of $L$-functions at the central point, Szab\'o \cite{Szab} showed results for the shifted moments of $L$-functions which are very useful.

Also, in \cite{J3}, Jutila derived an asymptotic formula for the first moment of the family of quadratic Dirichlet $L$-functions $L(1/2,\chi^{(p)})$ with prime moduli, where the primes $p$ satisfy certain congruence conditions. After that, Baluyot and Pratt obtained an asymptotic formula for the second moment of $L(1/2,\chi^{(p)})$ under the GRH.
Recently, Gao and Zhao \cite{GZ} established the right order of magnitude for the k-th moment of $L(1/2,\chi^{(p)})$.
In addition, Andrade and Keating \cite{A} and Bui and Florea \cite{BF} studied the problem of the moments of quadratic Dirichle $L$-functions of prime moduli over function field.

The problems of quadratic Dirichlet character sums are one of the important applications of moments of $L$-functions. In 1973, M. Jutila \cite{J1} asserted a conjecture which stated that for any positive integer $m$, there exist constants $c_1(m)$, $c_2(m)$, with values depending only on $m$, such that
\begin{align}
\label{genJacobibound}
   \sum_{\substack {\chi \in \mathcal S(X) }} \big| \sum_{n \leq Y} \chi(n) \big|^{2m} \leq c_1(m)XY^m(\log X)^{c_2(m)},
\end{align}
  where $\mathcal S(X)$ denotes the set of all non-principal quadratic Dirichlet characters of modulus at most $X$.
  Also in the same year, in another paper by Jutila \cite{J2}, he established \eqref{genJacobibound} for $m=1$ with $c_2(m)=8$. This was later improved by Armon \cite{A} who showed that $c_2(m) = 1$ could be taken. We can find other related bounds in \cite{MV} and \cite{V}.

  In this year, Gao and Zhao \cite{GZ1} gave a smoothed version of the above conjecture of Jutila under GRH. After that, in \cite{GZ2}, they improved their result by using shifted moments of quadratic Dirichlet $L$-functions and gave an unsmoothed version. More precisely, they showed that for large $X$, $Y$ and any real number $m > 0$,
\begin{align}
\label{Smoothdef}
  \sum_{\substack{d \leq X \\ (d,2)=1,\,\mu^2(d)=1}}\Big | \sum_{n\leq Y}\chi^{(8d)}(n)\Big |^{2m} \ll XY^m(\log X)^{O_{m,\varepsilon}(1)},
\end{align}
where $\mu$ is M\"obius function.

In this paper, we obtain sharp upper bounds for character sums of the unsmoothed version, like (\ref{Smoothdef}), for prime moduli. First, we establish the upper bounds for the shifted moments of quadratic Dirichlet $L$-functions as follows.

\begin{theorem}
\label{t1}
 With the notation as above and the truth of GRH, let $k\geq 1$ be a fixed integer and $a_1,\ldots, a_{k}$, $A$ fixed positive real numbers. Suppose that $X$ is a large real number and $t=(t_1,\ldots ,t_{k})$ a real $k$-tuple with $|t_j|\leq  X^A$. Then
\begin{align*}
\begin{split}
 & \sum_{\substack{(p,2)=1 \\ p \leq X}}\log p\big| L\big(1/2+it_1,\chi^{(8p)} \big) \big|^{a_1} \cdots \big| L\big(1/2+it_{k},\chi^{(8p)}  \big) \big|^{a_{k}} \\
\ll & X(\log X)^{(a_1^2+\cdots +a_{k}^2)/4}  \prod_{1\leq m<\ell \leq k} \Big|\zeta \Big(1+i(t_m-t_{\ell})+\frac 1{\log X} \Big) \Big|^{a_ma_{\ell}/2}\\
& \times\Big|\zeta \Big(1+i(t_m+t_{\ell})+\frac 1{\log X} \Big) \Big|^{a_ma_{\ell}/2}\prod_{1\leq m\leq k} \Big|\zeta \Big(1+2it_m+\frac 1{\log X} \Big) \Big|^{a^2_m/4+a_m/2},
\end{split}
\end{align*}
 where $\zeta(s)$ is the Riemann zeta function. Here the implied constant depends on $k$, $A$ and the $a_j$'s, but not on $X$ or the $t_j$'s.
\end{theorem}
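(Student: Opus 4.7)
The plan is to adapt the Soundararajan--Harper upper-bound method, as refined by Szab\'o for shifted moments, to the family $\{\chi^{(8p)} : p\le X\text{ odd prime}\}$. Under GRH, the Soundararajan inequality provides
$$\log\big|L(1/2+it,\chi^{(8p)})\big|\le \operatorname{Re}\sum_{n\le x}\frac{\Lambda(n)\chi^{(8p)}(n)}{n^{1/2+1/\log x+it}\log n}\cdot\frac{\log(x/n)}{\log x}+\frac{\log(8p(|t|+2))}{\log x}+O(1),$$
valid for any $x\ge 2$. Choosing $x=X^{1/B}$ for a large constant $B=B(k,A,a_j)$ and discarding the negligible prime-power contributions reduces $|L(1/2+it_j,\chi^{(8p)})|^{a_j}$ to $\exp(a_j\operatorname{Re}\mathcal P_j(\chi^{(8p)}))$ times a harmless power of $\log X$, where $\mathcal P_j$ is a Dirichlet polynomial supported on primes up to $x$ with phase twist $(p')^{-it_j}$.

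Next, following Harper's partitioning, I split the primes $p'\le x$ into geometrically growing intervals $I_1,\ldots,I_R$, write $\mathcal P_j=\sum_r\mathcal P_{j,r}$, and expand each $\exp(a_j\operatorname{Re}\mathcal P_{j,r})$ as a truncated Taylor series whose length depends on $r$, with shorter intervals tolerating more terms. H\"older's inequality across $r$ (and across $j$) reduces the claim to bounding, for various tuples of nonnegative integers $(m_{j,r})$, averages of the form
$$\sum_{\substack{p\le X\\(p,2)=1}}\log p\prod_{j,r}\big|\mathcal P_{j,r}(\chi^{(8p)})\big|^{2m_{j,r}}.$$
Multiplying out, each such average decomposes into character sums $\sum_{p\le X}\log p\cdot\chi^{(8p)}(N)$ over squarefree integers $N$ built from primes in the intervals $I_r$, weighted by phase factors $(p')^{\pm i(t_m\pm t_\ell)}$ arising from the shifts.

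Third, I evaluate these prime-moduli character sums. Quadratic reciprocity together with the multiplicativity of the Kronecker symbol gives $\chi^{(8p)}(N)=\varepsilon(N)\chi^{(N^*)}(p)$ for an $N^*$ depending only on $N$, so that under GRH for $L(s,\chi^{(N^*)})$ the prime number theorem in arithmetic progressions yields
$$\sum_{\substack{p\le X\\(p,2)=1}}\log p\cdot\chi^{(8p)}(N)=\mathbf 1_{N=\square}\cdot X+O\big(\sqrt{X}\,(\log NX)^2\big).$$
The error is negligible provided $N\le X^{2-\varepsilon}$, which is ensured because $B$ is taken large enough that all $N$ appearing satisfy $N\le x^K\le X^{2-\varepsilon}$, where $K$ is the total polynomial degree. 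The diagonal ``square'' contribution, after summing the geometric series contributed by paired primes $p'\in I_r$ across all intervals, produces
$$\sum_{p'\le x}\frac{1}{(p')^{1+i(t_m-t_\ell)+2/\log x}}=\log\Big|\zeta\big(1+i(t_m-t_\ell)+\tfrac{1}{\log X}\big)\Big|+O(1),$$
and analogously with $t_m-t_\ell$ replaced by $t_m+t_\ell$ (from the $P_mP_\ell$ diagonal) or by $2t_m$ (from $P_mP_m$), matching precisely the zeta product in the theorem.

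Finally I assemble these pieces through the combinatorial H\"older identity of Harper, in Szab\'o's shifted version, to pass from moment bounds on truncated exponentials back to bounds on $\exp(a_j\operatorname{Re}\mathcal P_j)$, accumulating the factor $(\log X)^{(a_1^2+\cdots+a_k^2)/4}$ from the zero-shift diagonal of each $P_j\overline P_j$. The main obstacle I foresee is the treatment of \emph{resonant} shift pairs, those indices $(m,\ell)$ for which $|t_m\pm t_\ell|$ is smaller than $1/\log X$ so that the corresponding zeta factor is genuinely large: as in Szab\'o, this requires partitioning $\{1,\ldots,k\}$ into clusters of mutually close shifts and choosing a distinct truncation length for each cluster, so that the H\"older combination stays efficient in every regime and the final zeta product matches the stated bound uniformly for $|t_j|\le X^A$. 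A secondary difficulty, specific to the prime-moduli setting, is that the PNT error $O(\sqrt{X}(\log NX)^2)$ is substantially weaker than the Poisson-summation orthogonality available for the full fundamental-discriminant family; this forces a smaller admissible value of $x$ and hence a more delicate tuning of the Harper partition parameters $B_r$.
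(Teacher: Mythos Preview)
Your proposal is essentially the same Soundararajan--Harper--Szab\'o strategy the paper uses, and the identification of the character-sum input (GRH-conditional PNT for $\sum_{p\le X}\log p\,\chi^{(8p)}(N)$) and of where the zeta factors come from is correct.

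Two points of comparison are worth noting. First, rather than treating each shift $t_j$ via its own polynomial $\mathcal P_j$ and later combining by H\"older, the paper immediately forms the single weight $h(q)=\tfrac12\operatorname{Re}\sum_{m=1}^k a_m q^{-it_m}$ and applies Harper's set-partition argument (the sets $\mathcal S(0),\dots,\mathcal S(\mathcal J)$) to the one Dirichlet polynomial $\sum_q 2h(q)\chi^{(8p)}(q)/q^{1/2+\cdots}$. This is what produces the cross terms $a_ma_\ell$ with the correct coefficients directly from expanding $2h(q)^2+h(q^2)$, and it is why no H\"older step ``across $j$'' is needed. Second, and relatedly, the clustering of resonant shift pairs that you flag as the main obstacle does not arise: the estimate $\sum_{q\le x}\cos(\alpha\log q)/q=\log|\zeta(1+1/\log x+i\alpha)|+O(1)$ (the paper's Lemma~\ref{cos}) is valid uniformly in $\alpha\ge 0$, so the diagonal computation goes through for all configurations of $(t_1,\dots,t_k)$ simultaneously without any case analysis. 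Your secondary concern about the weaker orthogonality forcing small $x$ is real and is handled exactly as you describe, via the parameters $\beta_j\le 10^{-M}$.
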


Suppose that $g:\mathbb{R}_{\geq 0} \rightarrow \mathbb{R}$ is defined by
\begin{equation} \label{gDef}
g(\alpha) =\begin{cases}
\log x,  & \text{if } \alpha\leq 1/\log x \text{ or } \alpha \geq e^x, \\
1/\alpha, & \text{if }   1/\log x \leq \alpha\leq 10, \\
\log \log \alpha, & \text{if }  10 \leq \alpha \leq e^{x}.
\end{cases}
\end{equation}

By applying Lemma \ref{cos} from Section 2 to Theorem \ref{t1}, we obtain a result related to the correlation factor $g(\alpha)$ as follows.
\begin{corollary}\label{cor1}
 With the notation as above and the truth of GRH, let $k\geq 1$ be a fixed integer and $a_1,\ldots, a_{k}$, $A$ fixed positive real numbers. Suppose that $X$ is a large real number and $t=(t_1,\ldots ,t_{k})$ a real $k$-tuple with $|t_j|\leq  X^A$. Then
\begin{align*}
  &\sum_{\substack{(p,2)=1 \\ p \leq X}}\log p\big| L\big(1/2+it_1,\chi^{(8p)} \big) \big|^{a_1} \cdots \big| L\big(1/2+it_{k},\chi^{(8p)}  \big) \big|^{a_{k}} \\
\ll &  X(\log X)^{(a_1^2+\cdots +a_{k}^2)/4} \prod_{1\leq m<\ell\leq k} g(|t_m-t_{\ell}|)^{a_ma_{\ell}/2}g(|t_m+t_{\ell}|)^{a_ma_{\ell}/2}\prod_{1\leq m\leq k} g(|2t_m|)^{a^2_m/4+a_m/2}.
\end{align*}
\end{corollary}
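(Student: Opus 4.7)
The plan is to deduce Corollary \ref{cor1} directly from Theorem \ref{t1} by converting each Riemann zeta factor into the corresponding value of the function $g$. The expected content of Lemma \ref{cos} (as its label suggests, presumably obtained from the Dirichlet series identity $\log|\zeta(1+\sigma+i\alpha)| = \sum_p \cos(\alpha \log p)/p^{1+\sigma} + O(1)$ for $\sigma > 0$, combined with standard prime-sum bounds) is a pointwise inequality of the shape $|\zeta(1 + i\alpha + 1/\log X)| \ll g(|\alpha|)$ valid for all real $\alpha$ in the range that arises in Theorem \ref{t1}. The three nontrivial regimes in the definition \eqref{gDef} of $g$ mirror the three standard regimes for bounding $\zeta$ on the line $\mathrm{Re}(s) = 1 + 1/\log X$: the simple pole at $s=1$ gives the $\log X$ bound for very small $\alpha$, the elementary estimate $\zeta(1+it) \ll 1/|t|$ gives the $1/\alpha$ bound in the middle range, and Littlewood's GRH-conditional bound $\zeta(1+it) \ll \log\log|t|$ handles the large range.

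Granting Lemma \ref{cos}, and using the symmetry $|\zeta(\overline{s})| = |\zeta(s)|$ to rewrite each $|\zeta(1 + i(t_m - t_\ell) + 1/\log X)|$ as a function of $|t_m - t_\ell|$ (and similarly for the $t_m + t_\ell$ and $2t_m$ factors), I would apply the bound to each of the $k$ factors of the form $\zeta(1 + 2it_m + 1/\log X)$ and each of the $2\binom{k}{2}$ factors of the form $\zeta(1 + i(t_m \pm t_\ell) + 1/\log X)$ that appear in the bound of Theorem \ref{t1}. Because the exponents $a_m a_\ell/2$ and $a_m^2/4 + a_m/2$ are all positive (the $a_j$ being fixed positive real numbers by hypothesis), the substitution preserves the direction of inequality term by term, and the resulting product matches exactly the right-hand side of Corollary \ref{cor1}.

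No real obstacle is expected once Lemma \ref{cos} is available: this corollary is a purely cosmetic reformulation of Theorem \ref{t1} in a form tailored to the subsequent character-sum applications. The only minor point I would verify is that the hypothesis $|t_j| \le X^A$ forces $|t_m \pm t_\ell| \le 2X^A$ and $|2t_m| \le 2X^A$, all well below $e^X$ for large $X$, so only the first three regimes in \eqref{gDef} are actually invoked in the argument; the fourth regime is included merely to make $g$ globally defined on $\mathbb{R}_{\ge 0}$ and plays no role in the proof.
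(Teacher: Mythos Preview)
Your proposal is correct and matches the paper's approach exactly: the paper states outright that Corollary~\ref{cor1} follows by applying Lemma~\ref{cos} to Theorem~\ref{t1}, and Lemma~\ref{cos} does indeed give precisely the bound $|\zeta(1+1/\log X+i\alpha)|\ll g(\alpha)$ (via the chain $\log|\zeta(1+1/\log x+i\alpha)|=\sum_{p\le x}\cos(\alpha\log p)/p+O(1)\le \log g(\alpha)+O(1)$) that you anticipated. Your observation that only the first three regimes of $g$ are ever invoked is a nice extra remark not made explicit in the paper.
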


From this Corollary, we obtain the key result of this paper.
\begin{theorem}\label{t2}
With the notation as above and the truth of GRH, for any integer $k \geq 1$ and any real number $m$ satisfying $2m \geq k+1$, we have for large $X$, $Y$  and any $\varepsilon>0$,
\begin{align}
\label{mainestimate}
 S_m(X,Y):=\sum_{\substack{p \leq X \\ (p,2)=1}}\log p\Big | \sum_{n \leq Y}\chi^{(8p)}(n)\Big |^{2m} \ll XY^m(\log X)^{R(m,k,\varepsilon)},
\end{align}
where
\begin{align} \label{Edef}
 R(m,k,\varepsilon)
 = \max \Bigg(2m^2-m+1+\varepsilon,\,\, \frac{(2m-k)^2}{4}+&\frac{k(k-1)}{2}+2m+1+\varepsilon,\nonumber\\& m(2m+1)-k\Big(2m+\frac{3}{4}-k\Big)+\varepsilon\Bigg) .
 \end{align}
 In particular, by setting $k=2$, we obtain, for $m \geq 2$,
\begin{align}
\label{mainestime2}
 S_m(X,Y) \ll XY^m(\log X)^{2m^2-m+1+\varepsilon}.
\end{align}
\end{theorem}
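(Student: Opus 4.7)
The plan is to express the character sum as a contour integral of the associated $L$-function, raise it to the $2m$-th power, invoke Corollary \ref{cor1} to bound the resulting sum over primes, and then carefully evaluate the multidimensional integral using the piecewise definition of $g$ in \eqref{gDef}.

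First, under GRH one has a Perron-type representation obtained by shifting the contour to the line $\mathrm{Re}(s) = 1/2 + 1/\log Y$ and using the convexity bound for $L(s, \chi^{(8p)})$:
\begin{equation*}
 \Big|\sum_{n \leq Y} \chi^{(8p)}(n)\Big| \ll Y^{1/2} \int_{-T}^{T} \big|L\big(\tfrac{1}{2} + \tfrac{1}{\log Y} + it,\, \chi^{(8p)}\big)\big| \, \frac{dt}{1+|t|} \; + \; Y^{\varepsilon},
\end{equation*}
with $T$ a fixed large polynomial in $X$. Raising this to the $2m$-th power produces a $2m$-fold integral whose integrand is a product of $L$-values at the shifts $\tfrac{1}{2}+it_j$ for $j=1,\dots,2m$. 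After summing over primes $p \leq X$ weighted by $\log p$ and interchanging with the integrals, I would apply Corollary \ref{cor1} with its parameter $k$ set to $2m$ and $a_1=\cdots=a_{2m}=1$, obtaining
\begin{equation*}
\sum_{\substack{(p,2)=1 \\ p \leq X}} \log p \prod_{j=1}^{2m} \big|L(\tfrac{1}{2}+it_j,\,\chi^{(8p)})\big|
\ll X(\log X)^{m/2}\, G(t_1,\dots,t_{2m}),
\end{equation*}
where $G$ collects the $g$-factors
\begin{equation*}
G(t_1,\dots,t_{2m}) = \prod_{1 \leq j < \ell \leq 2m} g(|t_j - t_\ell|)^{1/2} g(|t_j + t_\ell|)^{1/2}\prod_{j=1}^{2m} g(|2t_j|)^{3/4}.
\end{equation*}

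The core analytic task is then bounding
\begin{equation*}
 I := \int_{[-T,T]^{2m}} G(t_1,\dots,t_{2m})\prod_{j=1}^{2m}\frac{dt_j}{1+|t_j|}.
\end{equation*}
The strategy is to partition $[-T,T]^{2m}$ according to how many of the $|t_j|$ fall into each of the three ranges of \eqref{gDef}, namely $[0, 1/\log X]$, $[1/\log X, 10]$, and $[10, e^X]$, and also according to the sizes of the pairwise sums and differences $|t_j \pm t_\ell|$. The integer $k$ appearing in \eqref{Edef} is the number of shifts chosen to lie in the small-parameter cluster near $0$; the hypothesis $2m \geq k+1$ guarantees that at least $2m-k$ shifts can be placed in the separated range. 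The three terms inside the maximum in $R(m,k,\varepsilon)$ arise respectively from: (i) all $2m$ shifts clustered within $1/\log X$ of the origin, (ii) a cluster of $k$ such shifts with the remaining $2m-k$ in the middle range $[1/\log X, 10]$, and (iii) a cluster of $k$ shifts with the rest in the large range $[10, e^X]$. Assembling the three bounds and taking the maximum yields \eqref{mainestimate}; the special case \eqref{mainestime2} then follows by taking $k=2$ and checking that $2m^2 - m + 1 + \varepsilon$ dominates the other two terms of $R(m,2,\varepsilon)$ when $m \geq 2$.

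The principal obstacle will be the bookkeeping in the last step: because the $g$-factors come in coupled pairs $g(|t_j - t_\ell|)g(|t_j + t_\ell|)$, the near-singularities of the two factors do not occur on the same diagonals, and controlling them simultaneously requires a careful change of variables on each piece of the partition. Optimising the split parameter $k$ against the gain from clustering versus the loss from large $g$-values is what produces the three-way maximum in $R(m,k,\varepsilon)$, and this optimisation — together with the verification that no term outside the maximum contributes more — is the technical heart of the argument.
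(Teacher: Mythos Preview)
Your proposal has a genuine structural gap in how the parameter $k$ enters the argument. You apply Corollary \ref{cor1} with $2m$ shifts and exponents $a_1=\cdots=a_{2m}=1$, and then try to recover $k$ afterwards as ``the number of shifts in the small cluster near $0$''. This is not how $k$ functions in the statement, and it will not produce the three exponents in $R(m,k,\varepsilon)$.

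In the paper, $k$ is used \emph{before} the moment bound is applied: from the $2m$-th power of the single integral one extracts $k$ separate integrals over $t_1,\dots,t_k$ and uses H\"older on the remaining $(2m-k)$-th power to collapse it to a single integration variable $u$ with weight $|L(\tfrac12+iu,\chi^{(8p)})|^{2m-k}$. Corollary \ref{cor1} is then invoked with $k+1$ shifts and exponents $(1,\dots,1,2m-k)$, so that the leading $(\log X)$-power is $(k+(2m-k)^2)/4$, and the $g$-factor attached to $u$ carries exponent $(2m-k)^2/4+(2m-k)/2$. The three terms of $R(m,k,\varepsilon)$ come from a case analysis on $|t_1-u|$ and $|u|$ (small $|t_1-u|$; large $|t_1-u|$ with $|u|>5$; large $|t_1-u|$ with $|u|\le 5$), not from counting how many of $2m$ equal-weight shifts happen to cluster. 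With your equal-weight setup the combination $(2m-k)^2/4$ never appears, and your three ``cases'' are inconsistent (in your case (i) the number of clustered shifts is $2m$, not $k$, so it cannot simultaneously serve as the definition of $k$). The hypothesis $2m\ge k+1$ is there precisely to permit the H\"older step on the residual $(2m-k)$-th power; in your scheme that hypothesis has no role.

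A secondary omission: the unsmoothed nature of $\sum_{n\le Y}\chi^{(8p)}(n)$ is not free. The paper first disposes of $Y\ge X$ via Armon's bound, then for $Y\le X$ introduces a smooth cutoff $\Phi_U$, treats the smoothed piece by the contour argument, and controls the unsmoothed remainder $\sum_n\chi^{(8p)}(n)(1-\Phi_U(n/Y))$ separately via Heath-Brown's large sieve (Lemma \ref{Heath}) together with a bootstrap on the $(4m-2)$-th moment. Your single Perron step with an $O(Y^{\varepsilon})$ error glosses over this; after raising to the $2m$-th power and summing over $p\le X$ that error alone already costs $X(\log X)Y^{2m\varepsilon}$, which is not obviously dominated without the smoothing/large-sieve split.
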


\begin{remark}
From Theorem \ref{t2}, we derive that for any real number $m\geq 1$ and $\varepsilon>0$,
\begin{equation}\label{good}
    S_m(X,Y)\ll XY^m(\log X)^{O_{m,\varepsilon}(1)}.
\end{equation}
As for $0<m<1$, we can use H\"older's inequality to get
\begin{align*}
 S_m(X,Y) &\ll  X^{1-1/n}(S_{mn}(X,Y))^{1/n}\ll X^{1-1/n}(XY^{mn}(\log X)^{O_{m,\varepsilon}(1)})^{1/n}\\
 &\ll XY^m(\log X)^{O_{m,\varepsilon}(1)},
\end{align*}
where real number $n>1$ is large enough such that $mn\geq 1$. Then we can see that for any real number $m>0$, (\ref{good}) is valid.
\end{remark}

\section{Preliminaries}
\textbf{Notation.}
Let $p$ and $q$ always denote prime numbers. The symbol $X$ always denotes a large real number. The symbol $\varepsilon$ always denotes an arbitrary small positive number, which may not be the same in different occurrences. For any real number $t$, $\lfloor t\rfloor$ denotes its integer part. The symbol $\square$ denotes a square number. We use $\chi^{(c)}(n)$ to denote the Kronecker symbol $\big(\frac{c}{n}\big)$. We write $d|n$ to mean that $d$ divides $n$, and that $p^k\parallel n$ to mean that $p^k$ is the exact power of $p$ dividing $n$. The function $\Omega(n)$ denotes the number of prime factors of $n$, counting according to multiplicity. 

We give two Mertens' estimates and a result concerning a sum over primes.
\begin{lemma}\label{M}
    For $z\geq 2$ we have
 \begin{align*}
    &(a) \,\,\sum_{p\leq z}\frac{1}{p}=\log\log z+c+O(1/\log z), \quad \text{where c is a constant};\\
    &(b) \,\,\sum_{p\leq z}\frac{\log p}{p}=\log z+O(1).
 \end{align*}
\end{lemma}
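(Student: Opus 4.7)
The plan is to derive part (b) first by the classical Chebyshev-style comparison of $\log\lfloor z\rfloor!$ evaluated in two different ways, and then deduce part (a) from (b) by partial summation.

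For (b), I would start from Stirling's approximation, which gives $\log\lfloor z\rfloor! = z\log z - z + O(\log z)$. On the other hand, Legendre's formula for the prime factorization of $n!$ gives $\log n! = \sum_{p^k\le n}\lfloor n/p^k\rfloor\log p$. The contribution from prime powers with $k\ge 2$ is bounded by $z\sum_p\log p/(p(p-1)) = O(z)$, and the floor functions can be removed at the cost of a further error of $O(\sum_{p\le z}\log p) = O(z)$ by Chebyshev's bound $\vartheta(z)\ll z$. Dividing through by $z$ and comparing the two expressions leaves $\sum_{p\le z}\log p/p = \log z + O(1)$, which is exactly (b).

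For (a), I would apply Abel summation to (b). Setting $S(t) = \sum_{p\le t}\log p/p$, partial summation against the weight $1/\log t$ gives
\begin{align*}
\sum_{p\le z}\frac{1}{p} = \frac{S(z)}{\log z} + \int_2^z\frac{S(t)}{t\log^2 t}\,dt.
\end{align*}
Substituting $S(t) = \log t + R(t)$ with $R(t) = O(1)$, the main part $\int_2^z 1/(t\log t)\,dt$ contributes $\log\log z - \log\log 2$, the boundary term gives $1 + O(1/\log z)$, and the remainder $\int_2^z R(t)/(t\log^2 t)\,dt$ converges absolutely as $z\to\infty$ with tail of order $O(1/\log z)$ (since $\int_z^\infty dt/(t\log^2 t) = 1/\log z$). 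Collecting the constants yields $\sum_{p\le z}1/p = \log\log z + c + O(1/\log z)$, as required.

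Since Lemma~\ref{M} is a classical result due to Mertens, there is no substantive obstacle; the only care required is in the bookkeeping of constants in (a), where one must absorb $-\log\log 2$ together with the limiting value of the convergent integral $\int_2^\infty R(t)/(t\log^2 t)\,dt$ into a single unspecified constant $c$ while maintaining the stated $O(1/\log z)$ error.
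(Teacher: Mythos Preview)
Your argument is correct and follows the classical route to Mertens' estimates: Legendre's formula combined with Stirling for (b), then Abel summation for (a). The paper itself does not give a proof but simply cites Theorem~3.4 of Koukoulopoulos~\cite{K1}, so your proposal is more self-contained than what the paper provides. Your derivation is essentially the standard textbook one and is sound; the only minor point is that in the partial summation you should note that $S(t)=0$ for $t<2$ so that the boundary term at the lower endpoint vanishes, but this is implicit in your setup.
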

\begin{proof}
    See Theorem 3.4 of Koukoulopoulos \cite{K1}.
\end{proof}

\begin{lemma}\label{cos}
Let $x\geq 2$ and $\alpha\geq 0$. Then, we have
\begin{align}
\sum_{p\leq x}\frac{cos(\alpha\log p)}{p}&=\log|\zeta(1+1/\log x+i\alpha)|+O(1)\label{2.1}\\
&\leq \log g(\alpha) +O(1),\label{2.2}
\end{align}
where the function $g(\alpha)$ is defined in (\ref{gDef}).
\end{lemma}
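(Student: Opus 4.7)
The plan is to establish (2.1) by comparing the truncated prime sum to the full Dirichlet series expansion of $\log\zeta$, and then deduce (2.2) by inserting classical upper bounds for $|\zeta(1+1/\log x+i\alpha)|$ in the various regimes of $\alpha$, falling back to the trivial bound on the cosine sum when $\alpha$ is extremely large.

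For (2.1), I would write $\cos(\alpha\log p)=\operatorname{Re}(p^{-i\alpha})$ so that the left-hand side equals $\operatorname{Re}\sum_{p\leq x}p^{-1-i\alpha}$. Taking the logarithm of the Euler product and absorbing the absolutely convergent prime-power contribution into an $O(1)$ gives $\log\zeta(s)=\sum_p p^{-s}+O(1)$ uniformly for $\operatorname{Re}(s)>1$. Specialising to $s=1+\delta+i\alpha$ with $\delta=1/\log x$, it remains to show the two error terms in
\begin{align*}
\sum_p p^{-1-\delta-i\alpha}-\sum_{p\leq x}p^{-1-i\alpha}=\sum_{p\leq x}p^{-1-i\alpha}\bigl(p^{-\delta}-1\bigr)+\sum_{p>x}p^{-1-\delta-i\alpha}
\end{align*}
are each $O(1)$. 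The first is controlled via $|p^{-\delta}-1|\leq\delta\log p$ on $p\leq x$ and Lemma \ref{M}(b); the tail uses $\log p\geq\log x$ on $p>x$ together with the textbook bound $\sum_p(\log p)p^{-1-\delta}\ll 1/\delta$, which again delivers $O(1)$. Taking real parts yields (2.1).

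For (2.2), I would split by the size of $\alpha$. When $\alpha\leq 1/\log x$, the Laurent expansion $\zeta(s)=(s-1)^{-1}+O(1)$ at $s=1$ gives $|\zeta(1+\delta+i\alpha)|\leq 1/|\delta+i\alpha|+O(1)\ll 1/\delta=\log x$, matching $g(\alpha)$. For $1/\log x\leq\alpha\leq 10$ the same expansion gives $|\zeta|\ll 1/\alpha$, so $\log|\zeta|\leq\log(1/\alpha)+O(1)$. For $10\leq\alpha\leq e^x$, the classical bound $|\zeta(\sigma+it)|\ll\log|t|$ uniform for $\sigma\geq 1$, $|t|\geq 2$ gives $\log|\zeta|\leq\log\log\alpha+O(1)$. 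The genuinely different regime is $\alpha\geq e^x$: here any direct bound on $\zeta$ at such large height is far too weak, so instead I apply (2.1) in reverse and use $|\cos(\alpha\log p)|\leq 1$ together with Lemma \ref{M}(a) to conclude $\log|\zeta(1+\delta+i\alpha)|\leq\sum_{p\leq x}1/p+O(1)=\log\log x+O(1)=\log g(\alpha)+O(1)$.

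The main obstacle, such as it is, lies in the $\alpha\geq e^x$ regime, where the standard one-line bounds on $\zeta$ are too wasteful and one must route through the identity (2.1) to replace them by the trivial estimate on the cosine sum. Everything else is assembly of Mertens' theorem (Lemma \ref{M}) and textbook facts about $\zeta$ on the line $\operatorname{Re}(s)=1$, and poses no substantive difficulty.
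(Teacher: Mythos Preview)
Your argument is correct. The paper itself does not prove this lemma at all: it simply cites Lemma~3.2 of Koukoulopoulos~\cite{K} for \eqref{2.1} and Szab\'o~\cite{Szab} for \eqref{2.2}, so you have supplied exactly the standard self-contained proof that those references contain. The decomposition you use for \eqref{2.1}---splitting off the tail $p>x$ and the smoothing factor $p^{-\delta}-1$ on $p\leq x$, then invoking Mertens---is the canonical argument, and your case analysis for \eqref{2.2}, including the trick of routing through \eqref{2.1} and the trivial bound when $\alpha\geq e^x$, is precisely what is done in Szab\'o. There is nothing to compare beyond noting that you have written out what the paper leaves to citation.
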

\begin{proof}
The equality (\ref{2.1}) is a special case of Lemma 3.2 of Koukoulopoulos \cite{K}.  The inequality (\ref{2.2}) can be found in Szab\'o \cite{Szab}.
\end{proof}

\begin{lemma}\label{L(s,chi)}
Under GRH, for every primitive Dirichlet character $\chi$ modulo $q$, with $\Re(s) \geq 1/2$ and for any $\varepsilon>0$, we have
\begin{align*}
 L(s, \chi) \ll |qs|^{\varepsilon}.
\end{align*}
\end{lemma}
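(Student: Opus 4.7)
The plan is to establish this as the standard Lindel\"of-on-GRH estimate in the half-plane $\Re(s) \geq 1/2$, via the Phragm\'en--Lindel\"of convexity principle. For $\Re(s) \geq 1 + \varepsilon_0$ with any fixed small $\varepsilon_0 > 0$, the Dirichlet series expansion trivially gives $|L(s,\chi)| \leq \zeta(1+\varepsilon_0) = O_{\varepsilon_0}(1)$, so the bound is immediate in that region. The work therefore reduces to the strip $1/2 \leq \Re(s) \leq 1+\varepsilon_0$, and by convexity within this strip it suffices to prove the target bound on the critical line $\Re(s) = 1/2$.

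For the critical-line bound $|L(1/2+it,\chi)| \ll (q(|t|+2))^\varepsilon$, I would carry out the classical GRH argument of Littlewood. Starting from the Hadamard factorization of the completed $L$-function $\xi(s,\chi)$, one obtains
\[
\frac{L'}{L}(s,\chi) = \sum_\rho \frac{1}{s-\rho} + O(\log q(|t|+2)),
\]
where the sum runs over the non-trivial zeros $\rho$ of $L(s,\chi)$, combined with the Riemann--von Mangoldt type count of $O(\log q(|T|+2))$ zeros with imaginary part in each unit interval around $T$. Under GRH every such $\rho$ satisfies $\Re(\rho) = 1/2$, so $\Re\bigl(1/(s-\rho)\bigr) \geq 0$ whenever $\Re(s) \geq 1/2$, and integrating $L'/L$ in $\sigma$ from $\sigma = 2$ (where $\log |L(\sigma + it, \chi)| = O(1)$) down to $\sigma = 1/2$ yields the desired Lindel\"of-type bound after a routine estimate of the partial sums over zeros.

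Finally, the Phragm\'en--Lindel\"of principle applied in the strip $1/2 \leq \Re(s) \leq 1+\varepsilon_0$ between this critical-line bound and the trivial bound at $\Re(s) = 1+\varepsilon_0$ extends the estimate to $|L(s,\chi)| \ll |qs|^\varepsilon$ throughout the strip, and combining with the first step delivers the claim on the whole half-plane $\Re(s) \geq 1/2$. The main obstacle is the critical-line estimate itself, but this is entirely classical; in fact, under GRH the stronger Soundararajan--Chandee bound $|L(1/2+it,\chi)| \ll \exp\bigl(C \log(q(|t|+2))/\log\log(q(|t|+3))\bigr)$ is available in the literature, of which the present lemma is an immediate consequence, so one may simply quote it rather than reprove it.
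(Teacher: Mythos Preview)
Your sketch is essentially correct and recovers the classical Littlewood argument for the conditional Lindel\"of bound. The paper itself does not argue at all: its entire proof is a citation to Corollary~5.20 of Iwaniec--Kowalski, \emph{Analytic Number Theory}, which records precisely this standard consequence of GRH. So the difference is simply that the paper quotes the result off the shelf, while you outline the mechanism behind it.

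One small point of care in your outline: the displayed identity $\tfrac{L'}{L}(s,\chi) = \sum_\rho (s-\rho)^{-1} + O(\log q(|t|+2))$ is not literally valid as written, since the full sum over zeros is only conditionally convergent. The version one actually uses restricts to zeros with $|\Im\rho - t|\leq 1$ (with the tail absorbed into the error term), after which the integration in $\sigma$ and the zero-counting estimate go through exactly as you describe. Alternatively, as you note, invoking the Chandee--Soundararajan refinement (or simply the Iwaniec--Kowalski corollary the paper cites) makes the lemma immediate.
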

\begin{proof}
   See Corollary 5.20 of Iwaniec and Kowalski \cite{IK}.
\end{proof}

We give the Stirling's approximation for the factorial function.
\begin{lemma}\label{stirling}
For $n\in\mathbb{N}$, we have
    \begin{equation*}
    n!=\left(\frac{n}{e}\right)^n\sqrt{2\pi n}(1+O(1/n)).
\end{equation*}
\end{lemma}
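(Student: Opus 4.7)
The plan is to reduce $n! = \Gamma(n+1) = \int_0^\infty t^n e^{-t}\,dt$ to a Gaussian integral by Laplace's method at the saddle point $t=n$. Substituting $t = n + u\sqrt{n}$ transforms the integral into $(n/e)^n \sqrt{n} \int_{-\sqrt{n}}^{\infty} \exp(n\log(1+u/\sqrt{n}) - u\sqrt{n})\,du$, and the Taylor expansion $n\log(1+u/\sqrt{n}) - u\sqrt{n} = -u^2/2 + u^3/(3\sqrt{n}) - u^4/(4n) + \cdots$ shows that the integrand is well-approximated by $e^{-u^2/2}$ near the saddle.

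Next I would split the range of $u$ into a central region $|u| \leq n^{1/6}$, where the Taylor expansion is uniformly valid, and the complementary tail, on which crude bounds on $t^n e^{-t}$ show the contribution is exponentially smaller than the main term. Integrating the Gaussian $\int_{-\infty}^{\infty} e^{-u^2/2}\,du = \sqrt{2\pi}$ over the extended central range then produces the claimed main term $(n/e)^n \sqrt{2\pi n}$.

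To sharpen the error to $1 + O(1/n)$ rather than the easier $O(1/\sqrt{n})$, I would expand the exponent to one further order and use the fact that odd-power corrections integrate to zero against the Gaussian by symmetry, so the leading correction comes from the $u^4/n$ and $(u^3)^2/n$ terms, both genuinely of size $1/n$. An equally clean alternative is to apply the Euler--Maclaurin summation formula to $\log n! = \sum_{k=1}^{n} \log k$, which yields the same asymptotic directly with the error term controlled by the Bernoulli remainder.

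The only real (and very minor) delicacy is arranging the symmetry cancellation so as to reach $O(1/n)$ rather than $O(1/\sqrt{n})$; since the statement is entirely classical and plays only an auxiliary role in the paper, in practice I would simply cite a standard reference such as Tenenbaum's \emph{Introduction to Analytic and Probabilistic Number Theory} or any textbook treatment of the Gamma function.
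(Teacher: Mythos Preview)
Your proposal is correct, and in fact the paper's own ``proof'' consists solely of a citation (Theorem 1.12 of Koukoulopoulos, \emph{The Distribution of Prime Numbers}), which is exactly what you suggest doing at the end. Your sketch of Laplace's method with the symmetry argument to reach $O(1/n)$ is sound and already goes well beyond what the paper provides.
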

\begin{proof}
    See Theorem 1.12 of Koukoulopoulos \cite{K1}.
\end{proof}

\begin{lemma}\label{Heath}
Let $X$, $Z$ be positive integers, and $a_n$ be any complex numbers. Then
\begin{align*}
  &  \sum_{\substack{d \leq X \\ (d,2)=1}}\bigg|\sum_{n\leq Z} a_n\chi^{(8d)}(n)\bigg|^{2} \ll  (XZ)^{\varepsilon}(X+Z)\sum_{\substack{m,n \leq Z \\ mn=\square}}|a_ma_n|.
\end{align*}
\end{lemma}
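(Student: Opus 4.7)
The plan is to recognize the statement as a variant of Heath-Brown's quadratic large sieve inequality (from ``A mean value estimate for real character sums,'' Acta Arith. 72 (1995)), which gives exactly such a bound when both $d$ and $n$ are restricted to be odd and squarefree with the Jacobi symbol $\left(\frac{n}{d}\right)$ in place of $\chi^{(8d)}(n)$. The task is therefore to transfer Heath-Brown's result to the current setting, where neither $d$ nor $n$ is required to be squarefree and the character is $\chi^{(8d)}(n)=\left(\frac{8d}{n}\right)$.

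First I would expand the square to obtain
$$\sum_{\substack{d\leq X\\(d,2)=1}}\Big|\sum_{n\leq Z}a_n\chi^{(8d)}(n)\Big|^{2}=\sum_{m,n\leq Z}a_m\overline{a_n}\sum_{\substack{d\leq X\\(d,2)=1}}\chi^{(8d)}(mn),$$
using that $\chi^{(8d)}$ is real-valued. When $mn=\square$, the inner $d$-sum is trivially bounded by $X$, producing the diagonal contribution $X\sum_{mn=\square}|a_ma_n|$, which already matches the stated form. For the off-diagonal terms I would reduce to the squarefree setting by writing $d=ef^{2}$ with $e$ odd squarefree; using the Kronecker symbol identity $\chi^{(8ef^{2})}(mn)=\chi^{(8e)}(mn)\mathbbm{1}[(f,mn)=1]$ and then applying quadratic reciprocity (with the sign factors depending only on $e\bmod 8$, which can be removed by splitting the $e$-sum into residue classes modulo $8$), the resulting squarefree $e$-sum is estimated by Heath-Brown's theorem. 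Summing trivially over $f\leq\sqrt{X/e}$ costs only a $(\log X)^{O(1)}$ factor, which is absorbed into $(XZ)^{\varepsilon}$. A parallel decomposition $n=n_1 n_2^{2}$ with $n_1$ squarefree handles the non-squarefreeness on the $n$-side in the same way, after grouping the $a_n$ according to the square part.

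The main obstacle is to perform these reductions cleanly so that the diagonal on the right-hand side remains the natural $\{mn=\square\}$ condition (rather than some stricter relation coming from the auxiliary square factors $f^{2}$ and $n_2^{2}$) and so that the off-diagonal estimate coming out of Heath-Brown's theorem is not inflated during the reindexing. These are essentially bookkeeping steps; the deep analytic input, namely Poisson summation and the Weil-type bound for Gauss sums attached to quadratic characters, is already contained in Heath-Brown's original argument, and the loss incurred by each layer of reduction is polylogarithmic and can be swept into the $(XZ)^{\varepsilon}$ factor.
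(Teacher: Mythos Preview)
The paper gives no argument; its entire proof is ``See Corollary~2 of Heath-Brown~\cite{HB}.'' That corollary is stated for \emph{primitive} real characters of conductor at most $Q$ --- in the present setting, for odd \emph{squarefree} $d$ --- with the inner variable $n$ already allowed to be arbitrary. So your identification of the source is right, and the $n=n_1n_2^{2}$ step you sketch is exactly how Heath-Brown passes from his Theorem~1 to Corollary~2; the paper simply quotes the endpoint rather than redoing that reduction.

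Your handling of the $d$-side, however, has a genuine gap. First, once you open the square there is nothing left for Heath-Brown to act on: his inequality is an $L^{2}$ bound on the whole form $\sum_{d}\big|\sum_{n}a_n\chi^{(8d)}(n)\big|^{2}$, not a pointwise estimate for the individual sums $\sum_{d}\chi^{(8d)}(mn)$ that remain after expanding. Second, and decisively, the claim that ``summing trivially over $f\le\sqrt{X/e}$ costs only a $(\log X)^{O(1)}$ factor'' is wrong. If one keeps the square closed, writes $d=ef^{2}$, and applies Corollary~2 to the squarefree $e$-sum, one obtains $\ll(X/f^{2}+Z)$ times the diagonal; the $Z$-term is independent of $f$, and summing it over $f\le\sqrt{X}$ produces $Z\sqrt{X}$, not $Z$. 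Indeed the inequality without the squarefree restriction on $d$ is \emph{false}: taking $a_n=\chi^{(8)}(n)$, each odd prime $f\le\sqrt{X}$ gives $d=f^{2}$ with inner sum $\sum_{n\le Z,\,(n,2f)=1}1\gg Z$, so the left side is $\gg Z^{2}\sqrt{X}/\log X$, whereas the right side is $\ll(XZ)^{\varepsilon}(X+Z)Z\log Z$; set $X=Z$ to see the contradiction. This does not harm the paper --- in \S5.2 only a sum over primes $p$ is needed, and one may restrict to squarefree $d$ before invoking Heath-Brown --- but the extension to all odd $d$ that you are trying to carry out cannot succeed.
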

\begin{proof}
See Corollary 2 of Heath-Brown \cite{HB}.
\end{proof}

\begin{lemma}\label{Y>X}
Let $X$, $Y$ be positive integers. For any $m>0$, we have
\begin{align*}
  &  \sum_{\substack{|D| \leq X, D\neq\square \\ 4|D \,\text{or}\,D\equiv1(\bmod 4)}}\bigg|\sum_{n\leq Y} \chi^{(D)}(n)\bigg|^{2m} \ll_m  X^{m+1}.
\end{align*}
\end{lemma}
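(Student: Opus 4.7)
The plan is to reduce the case of general $m > 0$ to positive integer $m$ via H\"older's inequality, and then for integer $m$ to combine the periodicity of quadratic Kronecker symbols with the Polya--Vinogradov inequality and Lemma~\ref{Heath} (Heath-Brown's quadratic large sieve). Throughout I write $S_D = \sum_{n \leq Y} \chi^{(D)}(n)$.

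For the first step, given real $m > 0$, I take an integer $k \geq m$. H\"older's inequality gives
\[
\sum_D |S_D|^{2m} \leq \Bigl(\sum_D 1\Bigr)^{(k-m)/k} \Bigl(\sum_D |S_D|^{2k}\Bigr)^{m/k} \ll X^{(k-m)/k}\bigl(X^{k+1}\bigr)^{m/k} = X^{m+1},
\]
provided the lemma is already known for the integer $k$. Thus it suffices to treat $m \in \mathbb{Z}_{\geq 1}$.

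For integer $m$, I would first exploit periodicity: since $D \neq \square$ and $D \equiv 0$ or $1 \pmod 4$, $\chi^{(D)}$ is a non-principal Dirichlet character of period dividing $|D|$, so $\sum_{n=1}^{|D|} \chi^{(D)}(n) = 0$. Hence $S_D(Y) = S_D(Y \bmod |D|)$, and we may assume $Y \leq X$. If $Y \leq \sqrt{X}$, the trivial bound $|S_D| \leq Y$ immediately gives $\sum_D |S_D|^{2m} \leq X \cdot Y^{2m} \leq X^{m+1}$, so we may further assume $\sqrt{X} < Y \leq X$. In this range, Polya--Vinogradov gives $\max_D |S_D| \ll \sqrt{X}\log X$, while Lemma~\ref{Heath} applied with $a_n = \mathbf{1}_{n \leq Y}$ yields $\sum_D |S_D|^2 \ll X^{2+\varepsilon}$, since the diagonal sum $\sum_{n_1 n_2 = \square,\, n_i \leq Y} 1 \ll Y \log Y$ after splitting over squarefree kernels. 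H\"older's inequality now gives
\[
\sum_D |S_D|^{2m} \leq (\max_D |S_D|)^{2m-2}\sum_D |S_D|^2 \ll X^{m-1}(\log X)^{2m-2} \cdot X^{2+\varepsilon} \ll X^{m+1+\varepsilon},
\]
which yields the desired bound once the $\varepsilon$-loss (really an $X^{o(1)}$ factor) is absorbed into the implied constant.

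The main obstacle is twofold. First, Lemma~\ref{Heath} as stated is for characters $\chi^{(8d)}$ with $d$ odd, whereas the sum here includes $D$ with $4 \mid D$. I would handle this by writing $D = 2^a D'$ with $D'$ odd and using $\chi^{(D)}(n) = \chi^{(D')}(n)\bigl(\tfrac{2}{n}\bigr)^a \mathbf{1}_{(n,2)=1}$ to reduce to the odd case with only a bounded overhead in $a$. Second, the above argument actually produces $X^{m+1+\varepsilon}$ rather than a clean $X^{m+1}$; obtaining the stated bound requires absorbing the logarithmic losses from Polya--Vinogradov and from the large sieve into the implicit constant depending on $m$, which is the sharpest part of the argument.
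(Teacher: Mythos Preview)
The paper does not give its own proof of this lemma; it simply cites Theorem~1 of Armon~\cite{A}. So there is no detailed argument in the paper to compare against, and your task was effectively to reconstruct (or substitute for) Armon's result.

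Your approach has a genuine gap at the final step. You obtain $X^{m+1+\varepsilon}$ and then assert that the $\varepsilon$-loss ``(really an $X^{o(1)}$ factor) is absorbed into the implied constant.'' This is false: a factor of $X^{\varepsilon}$ or $(\log X)^{C}$ tends to infinity with $X$ and cannot be absorbed into a constant depending only on $m$. The $(XZ)^{\varepsilon}$ in Lemma~\ref{Heath} is an intrinsic feature of Heath-Brown's quadratic large sieve, so any argument routed through it will inherit this loss and deliver only $\ll_{m,\varepsilon} X^{m+1+\varepsilon}$, which is strictly weaker than the stated lemma. There is also a smaller flaw in your periodicity step: writing $S_D(Y)=S_D(Y \bmod |D|)$ replaces $Y$ by a value depending on $D$, so you cannot simply ``assume $Y\leq X$'' with a single common $Y$ and then feed that $Y$ into the large sieve, whose coefficients $a_n$ must be independent of the outer summation variable.

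That said, for the paper's actual application (handling $Y\geq X$ at the start of the proof of Theorem~\ref{t2}), a bound of $X^{m+1}(\log X)^{O_m(1)}$ would already suffice, since this is still $\ll XY^m(\log X)^{R}$ when $Y\geq X$. If that weaker bound is all you need, the quickest route bypasses both the large sieve and the periodicity reduction: P\'olya--Vinogradov gives $|S_D|\ll |D|^{1/2}\log|D|$ uniformly in $Y$, whence $\sum_{|D|\leq X}|S_D|^{2m}\ll X^{m+1}(\log X)^{2m}$ directly.
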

\begin{proof}
See Theorem 1 of Armon \cite{A}.
\end{proof}

The following is Perron's formula.
\begin{lemma}\label{perron}
    Assume that the series
\begin{equation*}
    f(s)=\sum_{n=1}^{\infty}\frac{w_n}{n^s}, \quad s=\sigma+it,
\end{equation*}
    converges absolutely for $\sigma>1$, that $|w_n|\leq W(n)$, where $W(n)>0$ is a monotonically increasing function, and that
    \begin{equation*}
        \sum_{n=1}^{\infty}\frac{|w_n|}{n^{\sigma}}=O((\sigma-1)^{-\alpha}),\quad \alpha>0,
    \end{equation*}
    as $\sigma\rightarrow 1^+$. Then for any $1<b\leq b_0$, $T\geq 1$ and $x=N+1/2$, we have
    \begin{equation}
        \sum_{n\leq x}w_n=\frac{1}{2\pi i}\int_{b-iT}^{b+iT}f(s)\frac{x^s}{s}ds+O\Big(\frac{x^b}{T(b-1)^{\alpha}}\Big)+O\Big(\frac{xW(2x)\log x}{T}\Big),
    \end{equation}
    where the constants in the O symbols depend only on $b_0$.
\end{lemma}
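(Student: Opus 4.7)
The plan is to reduce the statement to the classical discontinuous-integral identity
\[
\frac{1}{2\pi i}\int_{b-iT}^{b+iT}\frac{y^s}{s}\,ds = \mathbf{1}_{\{y>1\}} + O\!\left(\frac{y^b}{T|\log y|}\right),
\]
valid uniformly for $y>0$, $y\neq 1$, any fixed $b>0$, and $T\geq 1$. I would establish this by shifting the vertical contour far to the left when $y>1$ --- picking up the residue $1$ at the simple pole $s=0$ --- and far to the right when $0<y<1$, then bounding the two horizontal segments at height $\pm T$ using $|y^s/s|\leq y^{\Re s}/T$ followed by the geometric integral $\int y^{\sigma}\,d\sigma \ll y^b/|\log y|$, and letting the far-vertical segments tend to zero.

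With this identity in hand, the absolute convergence of $f(s)$ on the line $\Re s = b>1$ justifies interchanging sum and integral by Fubini, giving
\[
\frac{1}{2\pi i}\int_{b-iT}^{b+iT}f(s)\frac{x^s}{s}\,ds = \sum_{n=1}^{\infty}w_n\cdot\frac{1}{2\pi i}\int_{b-iT}^{b+iT}\frac{(x/n)^s}{s}\,ds.
\]
Since $x=N+1/2$ is never equal to a positive integer, the indicator terms $\mathbf{1}_{\{x/n>1\}}$ collect exactly to $\sum_{n\leq x}w_n$, leaving the error
\[
E \ll \frac{x^b}{T}\sum_{n=1}^{\infty}\frac{|w_n|}{n^b\,|\log(x/n)|}.
\]

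The only technical step is bounding $E$ by splitting according to the distance from $n$ to $x$. In the \emph{far} range $n\leq x/2$ or $n\geq 2x$, $|\log(x/n)|$ is bounded below by a positive absolute constant, so the contribution is
\[
\ll \frac{x^b}{T}\sum_{n=1}^{\infty}\frac{|w_n|}{n^b}\ll \frac{x^b}{T(b-1)^{\alpha}}
\]
by the growth hypothesis on the Dirichlet series at $\sigma=b$. In the \emph{near} range $x/2<n<2x$, I would invoke the monotonicity bound $|w_n|\leq W(2x)$, the uniform bound $(x/n)^b\ll 1$ for $b\leq b_0$, the estimate $|\log(x/n)|\gg |x-n|/x$, and the key lower bound $|x-n|\geq 1/2$ (immediate from $x=N+1/2$), yielding
\[
\sum_{x/2<n<2x}\frac{|w_n|(x/n)^b}{|\log(x/n)|} \ll W(2x)\sum_{x/2<n<2x}\frac{x}{|x-n|}\ll x\,W(2x)\log x,
\]
which after division by $T$ matches the second claimed error term.

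The only real obstacle is controlling the near-diagonal range: the half-integer choice $x=N+1/2$ and the monotonicity of $W$ in the hypotheses are precisely what prevent a blow-up from $n$ being arbitrarily close to $x$. The implied constants depend only on $b_0$ since $(x/n)^b$ stays uniformly bounded on the near range for $b\leq b_0$ and the residue computation and geometric-integral bound depend on $b$ only through this exponent.
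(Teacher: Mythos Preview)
Your argument is correct and is precisely the standard derivation of Perron's formula: the discontinuous-integral lemma, Fubini on the line $\Re s=b$, and the far/near splitting of the error with the half-integer hypothesis $x=N+1/2$ guaranteeing $|x-n|\ge 1/2$ in the near range. The paper itself gives no argument at all---its proof consists solely of the reference ``See Theorem~1 of Chapter~5 of Karatsuba''---so your write-up is in fact more detailed than what the paper provides, and it reproduces the classical proof one would find in that reference.
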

\begin{proof}
    See Theorem 1 of Chapter 5 of Karatsuba \cite{Ka}.
\end{proof}

Let $\Phi$ be a non-negative smooth function
\begin{equation}\label{Phi}
\Phi(x)\begin{cases}
            \leq 1,\quad  x\in [1/4,1/2]\cup [1,3/2],\\
            =1,\quad  x\in[1/2,1],\\
            =0,\quad \text{otherwise }.
            \end{cases}
\end{equation}
The Mellin transform of $\Phi(x)$ is denoted by $\widehat{\Phi}(s)$, and  for any complex number $s$, it is given by the following integral:
\begin{equation}\label{Mellin}
    \widehat{\Phi}(s)=\int_0^{\infty}\Phi(x)x^{s-1}dx.
\end{equation}

\begin{lemma}\label{asymp}
   Assume $GRH$. Let $c$ be a positive odd integer, $\Phi(x)$ be a smooth function as described above, and let $k\in\mathbb{R}$ with $k\geq 0$. Then, for any $\varepsilon>0$, we have
      \begin{equation}\label{formula}
       \sum_{(p,2)=1}(\log p)\Big(1-\frac{1}{2p}\Big)^{-k}\chi^{(8p)}(c)\Phi\left(\frac{p}{X}\right)=\delta_{c=\square}\widehat{\Phi}(1)X+O(X^{1/2+\varepsilon}\log\log(c+2)).
   \end{equation}
\end{lemma}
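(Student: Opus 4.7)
My plan is to express the prime sum as a Mellin contour integral, use quadratic reciprocity to convert $p\mapsto\chi^{(8p)}(c)$ into a Dirichlet character in the variable $p$, and then shift the contour past $\Re(s)=1$; the main term appears precisely as the residue at $s=1$ when $c$ is a square. Since $\Phi$ is smooth and compactly supported, $\widehat{\Phi}(s)$ is entire and of super-polynomial decay in vertical strips, and Mellin inversion gives $\Phi(p/X)=\tfrac{1}{2\pi i}\int_{(2)}\widehat{\Phi}(s)X^sp^{-s}\,ds$. Interchanging sum and integral (justified by absolute convergence on $\Re s=2$), the left-hand side of~\eqref{formula} becomes $\tfrac{1}{2\pi i}\int_{(2)}\widehat{\Phi}(s)X^sF_c(s)\,ds$, where
\[
F_c(s):=\sum_{(p,2)=1}\frac{(\log p)\bigl(1-\tfrac{1}{2p}\bigr)^{-k}\chi^{(8p)}(c)}{p^s}.
\]

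Next I would rewrite $F_c$. For $p\nmid 2c$, the Kronecker identity $\chi^{(8p)}(c)=(\tfrac{8}{c})(\tfrac{p}{c})$ followed by quadratic reciprocity turns $(\tfrac{p}{c})$ into $(\tfrac{c}{p})$ up to a sign depending only on $p\bmod 4$, so that $p\mapsto\chi^{(8p)}(c)$ agrees outside $p\mid 2c$ with a real Dirichlet character $\psi_c$ of conductor dividing $4c$; this character is principal precisely when $c$ is a perfect square (note that for $p\mid c$, the Kronecker symbol vanishes, so those primes drop out of $F_c$). The extra factor $(1-\tfrac{1}{2p})^{-k}=1+O(1/p)$ contributes only an Euler product that is analytic and bounded on $\Re s\geq 1/2+\varepsilon$, and absorbing the Euler factors at $p\mid 2c$ and the prime-square terms into a correction yields
\[
F_c(s)=\Bigl(\tfrac{8}{c}\Bigr)\Bigl(-\tfrac{L'}{L}(s,\psi_c)\Bigr)G(s)+H_c(s),
\]
with $G$ analytic and bounded on $\Re s\geq 1/2+\varepsilon$ and $H_c$ holomorphic there and bounded by $\prod_{p\mid 2c}(1-1/p)^{-O(1)}\ll\log\log(c+2)$ via Mertens.

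I would then shift the contour to $\Re s=1/2+\varepsilon$. In the case $c=\square$, $\psi_c$ is principal and $-L'/L(s,\psi_c)$ differs from $-\zeta'/\zeta(s)$ by a bounded holomorphic function on $\Re s\geq 1/2+\varepsilon$, so the simple pole at $s=1$ with residue $1$ produces the main term $\widehat{\Phi}(1)X$, while the remaining integral is $O(X^{1/2+\varepsilon})$ by the GRH-conditional bound on $\zeta'/\zeta$ (derivable from Lemma~\ref{L(s,chi)} via Cauchy's formula on a small disc) combined with the rapid decay of $\widehat\Phi$. In the case $c\neq\square$, $\psi_c$ is non-principal, so under GRH $L(s,\psi_c)$ is zero-free for $\Re s>1/2$ and no residue is collected; the shifted integral together with the $H_c$-contribution then yields the error $O(X^{1/2+\varepsilon}\log\log(c+2))$.

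The subtlest step is the character identification: controlling the reciprocity sign (which must be absorbed into a mod-$4$ twist), pinning the conductor of $\psi_c$ to divide $4c$ uniformly so that Lemma~\ref{L(s,chi)} applies cleanly, and handling the $p\mid 2c$ Euler factors carefully enough that their contribution is $\log\log(c+2)$ rather than a larger $(\log c)^{O(1)}$. The remaining contour-shift estimates and the treatment of prime-power terms are routine under GRH.
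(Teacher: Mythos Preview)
Your approach is sound in outline but takes a much longer path than the paper. The paper's proof is essentially two lines: for $k=0$ the statement is Lemma~2.4 of \cite{GZ}, and for $k>0$ one writes $(1-\tfrac{1}{2p})^{-k}=1+O(1/p)$, so the left side of \eqref{formula} equals the $k=0$ sum plus $O\bigl(\sum_{p}\tfrac{\log p}{p}\,\Phi(p/X)\bigr)=O(\log X)$ by Lemma~\ref{M}(b), which is absorbed into the error term. What you sketch is, in effect, a reconstruction of the argument behind that cited lemma: Mellin inversion, reciprocity to convert $p\mapsto\chi^{(8p)}(c)$ into a fixed real character $\psi_c$, then a contour shift under GRH. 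This is perfectly reasonable and more self-contained, but a few details are off. First, the $(1-\tfrac{1}{2p})^{-k}$ correction enters \emph{additively} as a Dirichlet series absolutely convergent for $\Re s>0$, not as a multiplicative Euler factor $G(s)$. Second, bounding $L'/L$ on $\Re s=1/2+\varepsilon$ requires more than Lemma~\ref{L(s,chi)} plus Cauchy's formula, since you also need $1/L$ to be controlled, i.e., the zero-free input from GRH itself (via Borel--Carath\'eodory or the explicit formula). Third, your justification $H_c\ll\prod_{p\mid 2c}(1-1/p)^{-O(1)}\ll\log\log(c+2)$ does not match the shape of the ramified-prime corrections: on the line $\Re s=1/2+\varepsilon$ those terms look like $\sum_{p\mid c}(\log p)p^{-1/2-\varepsilon}$, which for primorial-type $c$ is of size $(\log c)^{1/2}$, not $\log\log c$. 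Recovering the stated $\log\log(c+2)$ dependence therefore needs a more careful argument than what you wrote; in the paper's applications this is moot since $c\le X^{O(1)}$ and any power of $\log c$ is absorbed by $X^{\varepsilon}$, but to match the lemma as stated you should revisit this point.
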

\begin{proof}
    If $k=0$, see Lemma 2.4 in Gao and Zhao \cite{GZ}. If $k>0$, we have
    \begin{equation*}
        \sum_{(p,2)=1}(\log p)\Big(1-\frac{1}{2p}\Big)^{-k}\chi^{(8p)}(c)\Phi\left(\frac{p}{X}\right)=\sum_{(p,2)=1}(\log p)\chi^{(8p)}(c)\Phi\left(\frac{p}{X}\right)+O\left(\sum_{(p,2)=1}\frac{\log p}{p}\Phi\left(\frac{p}{X}\right)\right).
    \end{equation*}
    For the main term of the above expression, it is the same as the case $k=0$. Using Lemma \ref{M} $(b)$ to estimate the error term, we get a contribution of $\ll \log X$, which can be absorbed by the error term in (\ref{formula}).
\end{proof}

\begin{lemma}
\label{lemmasum}
  Let $k$ be a positive integer and let $Q,a_1,a_2,\ldots, a_{k}$ be fixed positive real constants, $x\geq 2$.  Define $a:=a_1+\cdots+ a_{k}$.  Assume $X$ is a large real number, $p$ is any odd prime satisfying $p \leq X$, $\sigma \geq 1/2$ and $t_1,\ldots, t_{k}$ are fixed real numbers with $|t_i|\leq X^Q$. For any integer $n$, define
$$h(n):=\frac{1}{2}\Re \Big( \sum^{k}_{m=1}a_mn^{-it_m} \Big).$$
Then, under the GRH, we have the following inequality holds:
\begin{align}\label{lem2.4}
 & \sum^{k}_{m=1}  a_m\log |(1-1/2p)L(\sigma+it_m,\chi^{(8p)})| \nonumber\\
    \leq & 2 \sum_{q\leq x} \frac{h(q)\chi^{(8p)}(q)}{q^{1/2+\max(\sigma-1/2, 1/\log x)}}\frac{\log x/q}{\log x}+
    \sum_{q\leq x^{1/2}} \frac{h(q^2)}{q^{1+2\max(\sigma-1/2, 1/\log x)}}+(Q+1)a\frac{\log X}{\log x}+O(1).
\end{align}
\end{lemma}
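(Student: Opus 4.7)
The statement is a Soundararajan--Chandee-type upper bound for positive linear combinations of $\log|L|$-values, specialized to the family $\chi^{(8p)}$. The plan is to apply Soundararajan's Main Proposition from \cite{Sound}, in the form refined for shifted moments by \cite{Szab}, to each $\log|L(\sigma+it_m,\chi^{(8p)})|$ separately and then take the weighted linear combination with coefficients $a_m$.

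First, note that $\log|1-1/(2p)| = O(1/p) = O(1)$ uniformly in odd $p$, so the factor $(1-1/(2p))$ contributes only to the final $O(1)$. Under GRH, Soundararajan's inequality applied to the primitive character $\chi^{(8p)}$ of conductor $8p$ gives
\begin{align*}
\log|L(\sigma+it_m,\chi^{(8p)})| \leq \Re\sum_{n\leq x}\frac{\Lambda(n)\chi^{(8p)}(n)}{n^{\sigma_0+it_m}\log n}\cdot\frac{\log(x/n)}{\log x} + \frac{(1+\lambda_0)\log(8p(|t_m|+3))}{\log x} + O(1),
\end{align*}
where $\sigma_0 = 1/2 + \max(\sigma-1/2, 1/\log x)$ and $\lambda_0>0$ is a free parameter. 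Multiplying by $a_m$, summing over $m$, and interchanging the $n$- and $m$-sums, the real part collapses via $\Re\sum_m a_m n^{-it_m} = 2h(n)$, yielding
\[
2\sum_{n\leq x}\frac{\Lambda(n)\chi^{(8p)}(n)h(n)}{n^{\sigma_0}\log n}\cdot\frac{\log(x/n)}{\log x}.
\]
I would then split by prime powers: the $n=q$ primes give $\Lambda(q)/\log q=1$ and match the first term on the right of (\ref{lem2.4}); the $n=q^2$ terms give $\Lambda(q^2)/(2\log q)=1/2$, and the bounds $0 \leq \chi^{(8p)}(q^2) \leq 1$ and $0 \leq \log(x/q^2)/\log x \leq 1$ yield the second term (any sign defect from negative $h(q^2)$ is absorbed into $O(1)$ since $|h(q^2)|\leq a/2$ and $\sum_q q^{-2\sigma_0} = O(1)$); and $n = q^k$ for $k\geq 3$ contributes $O(1)$ by the convergence of $\sum_{q,\,k\geq 3} q^{-k/2}/(k\log q)$.

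Finally, for the conductor-dependent term, $\log(8p(|t_m|+3)) \leq (Q+1)\log X + O(1)$ using $p \leq X$ and $|t_m| \leq X^Q$, so after summing in $m$ the error is $(1+\lambda_0)(Q+1)a\log X/\log x + O(1)$. The main obstacle is to replace $(1+\lambda_0)$ by the clean constant $1$ appearing in (\ref{lem2.4}): this is done either by sending $\lambda_0$ to zero in Soundararajan's bound (verifying the argument still goes through in the limiting case, with $\sigma_0 = 1/2+1/\log x$ when $\sigma=1/2$) or, more cleanly, by appealing to the sharper variant in \cite{Szab} that attains constant $1$ through a better choice of test function in the explicit formula. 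Combining all contributions then gives (\ref{lem2.4}); the remaining steps are routine applications of the Mertens-type estimates from Lemma \ref{M}.
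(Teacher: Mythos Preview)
Your approach is essentially the paper's: both rest on Soundararajan's inequality (which the paper imports wholesale as Lemma~2.7 of \cite{GZ2}), followed by isolating the prime and prime-square contributions and discarding higher powers into $O(1)$. Two remarks, one a correction and one a comparison.

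First, your justification ``$\sum_q q^{-2\sigma_0}=O(1)$'' is false: when $\sigma=1/2$ one has $\sigma_0=1/2+1/\log x$, so $q^{2\sigma_0}\asymp q$ and the sum is $\asymp\log\log x$. The correct way to pass from $\sum_{q\le x^{1/2}} h(q^2)\chi^{(8p)}(q^2)q^{-2\sigma_0}\,\tfrac{\log(x/q^2)}{\log x}$ to the stated second term is to bound the discrepancy by
\[
\sum_{q\le x^{1/2}}\frac{|h(q^2)|}{q^{2\sigma_0}}\cdot\frac{2\log q}{\log x}+\frac{|h(p^2)|}{p^{2\sigma_0}}
\ \ll\ \frac{a}{\log x}\sum_{q\le x^{1/2}}\frac{\log q}{q}+\frac{a}{p}\ =\ O(1)
\]
via Lemma~\ref{M}(b), not by bounding the full tail sum.

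Second, rather than absorbing $(1-1/(2p))$ into $O(1)$ at the outset, the paper lets it emerge: starting from the \cite{GZ2} bound with $\chi^{(8p)}(q^2)$ still present in the prime-square sum, replacing $\chi^{(8p)}(q^2)$ by $1$ costs only the single term $q=p$, which is at most $a/(2p)\le -a\log(1-1/(2p))$ and thus transfers to the left-hand side as exactly the stated factor. Your shortcut is of course equally valid for the lemma as written, since $a\log(1-1/(2p))=O(1)$; the paper's route just explains where that otherwise mysterious factor comes from.
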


\begin{proof}
    By Lemma 2.7 of Gao and Zhao \cite{GZ2}, we have
\begin{align}\label{GZ}
& \sum^{k}_{m=1}  a_m\log |L(\sigma+it_m,\chi^{(8p)})| \nonumber\\
    \leq & 2 \sum_{q\leq x} \frac{h(q)\chi^{(8p)}(q)}{q^{1/2+\max(\sigma-1/2, 1/\log x)}}\frac{\log x/q}{\log x}
    +\sum_{q\leq x^{1/2}} \frac{h(q^2)\chi^{(8p)}(q^2)}{q^{1+2\max(\sigma-1/2, 1/\log x)}}+(Q+1)a\frac{\log X}{\log x}+O(1).
\end{align}
For the second term of the right hand side of the above inequality, we can get
\begin{align}
\label{sump}
 \sum_{q\leq x^{1/2}} \frac{h(q^2)\chi^{(8p)}(q^2)}{q^{1+2\max(\sigma-1/2, 1/\log x)}}=& \sum_{q\leq x^{1/2}} \frac{h(q^2)}{q^{1+2\max(\sigma-1/2, 1/\log x)}}
 -\frac{h(p^2)}{p^{1+2\max(\sigma-1/2, 1/\log x)}} \nonumber\\
 \leq & \sum_{q\leq x^{1/2}} \frac{h(q^2)}{q^{1+2\max(\sigma-1/2, 1/\log x)}}+ \frac{a}{2p}\nonumber\\
 \leq & \sum_{q\leq x^{1/2}} \frac{h(q^2)}{q^{1+2\max(\sigma-1/2, 1/\log x)}}- a\log \Big(1-\frac{1}{2p}\Big).
\end{align}
Here the final step uses $x\leq-\log(1-x)$ for any $0<x<1$. Inserting  (\ref{sump}) into (\ref{GZ}), we obtain the result.
\end{proof}

\begin{lemma}\label{rude}
Assume the truth of GRH, let $k\geq 1$ be a fixed integer, and ${\bf a}=(a_1,\ldots ,a_{k})$ and $\ t=(t_1,\ldots ,t_{k})$
 be real $k$-tuples with $a_i \geq 0$ for all $i$.  Define $a:=a_1+\cdots+ a_{k}$.  Then for sufficiently large real number $X$ and $\sigma \geq 1/2$, we have
\begin{align*}
\sum_{\substack{(p,2)=1 \\ p \leq X}}\log p\big| L\big(\sigma+it_1,\chi^{(8p)} \big) \big|^{a_1} \cdots \big| L\big(\sigma+it_{k},\chi^{(8p)}  \big) \big|^{a_{k}} \ll_{{\bf a}} &  X(\log X)^{a(a+1)/2}.
\end{align*}
\end{lemma}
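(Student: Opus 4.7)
The plan is to adapt Soundararajan's moment method \cite{Sound} (as refined in \cite{H, Szab}) to the prime-moduli family $\{\chi^{(8p)}\}$. The first step is to invoke Lemma \ref{lemmasum} with the choice $x = X$. This choice is crucial: it makes the error term $(Q+1)a\log X/\log x$ collapse to $O_{\mathbf{a},Q}(1)$, while the diagonal sum $\sum_{q \leq X^{1/2}} h(q^2)/q^{1 + 2\max(\sigma - 1/2, 1/\log X)}$ is bounded by $(a/2)\log\log X + O_{\mathbf{a}}(1)$ via $|h(q^2)| \leq a/2$ and Lemma \ref{M}(a). After exponentiating and absorbing the $(1 - 1/(2p))^{-a}$ factor into the implicit constant, I obtain
\[
\prod_{m=1}^k |L(\sigma + it_m, \chi^{(8p)})|^{a_m} \ll_{\mathbf{a}} (\log X)^{a/2}\, e^{P(p)},
\]
where $P(p) := 2\sum_{q \leq X} h(q)\chi^{(8p)}(q)\, q^{-1/2 - \max(\sigma - 1/2, 1/\log X)}\, \log(X/q)/\log X$.

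It then suffices to prove $\sum_{p \leq X}(\log p)\, e^{P(p)} \ll_{\mathbf{a}} X (\log X)^{a^2/2}$, since combining with the prefactor recovers the target exponent $a/2 + a^2/2 = a(a+1)/2$. Heuristically this is a Gaussian moment calculation: viewing $\chi^{(8p)}(q) \in \{\pm 1\}$ as essentially random signs as $p$ varies, the variance of $P(p)$ is $\sum_q 4 h(q)^2 q^{-1-2\max(\cdot)}[\log(X/q)/\log X]^2 \leq a^2 \log\log X + O_{\mathbf{a}}(1)$ by Mertens, so the Gaussian formula $\mathbb{E}[e^P] = \exp(\mathrm{Var}/2)$ predicts $(\log X)^{a^2/2}$. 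To make this rigorous I would apply Soundararajan's distributional argument: bound $\#\{p \leq X : P(p) > V\}$ via Chebyshev's inequality applied to the $2K$-th moment $\sum_p (\log p) P(p)^{2K}$, which is evaluated by expanding $P(p)^{2K}$ as a sum over $2K$-tuples of primes and invoking Lemma \ref{asymp}. Only tuples $(q_1, \ldots, q_{2K})$ with $q_1 \cdots q_{2K} = \square$ contribute to the main term; these are counted by the standard Wick-type pairing combinatorics, yielding a main term $\ll X \cdot (2K)!/(2^K K!) \cdot (a^2 \log\log X)^K \ll X(C_{\mathbf{a}} K \log\log X)^K$ after Stirling (Lemma \ref{stirling}). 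Optimizing at $K \asymp V^2/(a^2 \log\log X)$ yields the sub-Gaussian tail $\#\{p \leq X : P(p) > V\} \ll X\exp(-cV^2/\log\log X)$, and integrating $e^V$ against this tail gives the claimed moment estimate.

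The principal technical obstacle is controlling the cumulative $O(X^{1/2+\varepsilon})$ error from Lemma \ref{asymp} in the moment expansion: summed naively over all $2K$-tuples of primes in $[2, X]$, this error easily exceeds the target $X$. The standard remedy is to truncate $P(p)$ to primes $q \leq z := X^{1/(CK)}$ for a large constant $C$, so that $q_1 \cdots q_{2K} \leq X^{2/C}$ and the Lemma \ref{asymp} error is $o(X)$. The portion of $P(p)$ supported on primes in $(z, X]$ must then be handled separately, for instance by Cauchy-Schwarz paired with a direct second-moment estimate or via Heath-Brown's quadratic large sieve (Lemma \ref{Heath}). A joint optimization of the truncation level $z$, the Taylor degree $K$, and the threshold $V$ then completes the argument; the overall structure follows the Soundararajan template.
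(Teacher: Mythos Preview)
Your overall plan---Soundararajan's distributional method, adapted to the family $\{\chi^{(8p)}\}$ via Lemma~\ref{asymp}---is exactly what the paper defers to (the proof in the paper is simply a pointer to Theorem~1.2 of Gao--Zhao~\cite{GZ}). The reduction in your first paragraph is fine: Lemma~\ref{lemmasum} with $x=X$ does give $\prod_m|L|^{a_m}\ll_{\mathbf a}(\log X)^{a/2}e^{P(p)}$.

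The gap is in your treatment of the tail $P_2(p)=\sum_{z<q\le X}(\cdots)\chi^{(8p)}(q)q^{-1/2}$. Neither of the tools you name is strong enough. If you try to compute the $2L$-th moment of $P_2$ via Lemma~\ref{asymp}, the error term summed over all $2L$-tuples contributes
\[
\ll X^{1/2+\varepsilon}\Big(\sum_{z<q\le X}\frac{a}{\sqrt q}\Big)^{2L}\asymp X^{1/2+L+\varepsilon}\Big(\frac{a}{\log X}\Big)^{2L},
\]
which already exceeds $X$ when $L=1$. If instead you invoke Heath-Brown (Lemma~\ref{Heath}) by writing $P_2^{2L}=|P_2^L|^2$, the factor $X+Z$ becomes $X+X^L\asymp X^L$, again swamping the target. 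A bare second-moment bound gives only $\#\{p:|P_2(p)|>W\}\ll X^{1+\varepsilon}(\log X)/W^2$, and no choice of $W$ makes this compatible with the sub-Gaussian tail $Xe^{-cV^2/\log\log X}$ you need on the $P_1$ side. So the splitting $P=P_1+P_2$ after having fixed $x=X$ cannot be closed with the tools listed.

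The remedy is not to fix $x=X$ at the outset. In Soundararajan's template (and in~\cite{GZ}) one applies the inequality of Lemma~\ref{lemmasum} with $x=x(V)$ chosen as a function of the level-set parameter $V$, say $x(V)=X^{A/V}$ for a large constant $A$. Then the deterministic error $(Q+1)a\log X/\log x(V)=(Q+1)aV/A$ is at most $V/10$, the Dirichlet polynomial is already short (primes $q\le x(V)$), and its $2K$-th moment with $K\le V/(4A)$ is computable by Lemma~\ref{asymp} since $x(V)^{2K}\le X^{1/2}$. There is no residual tail $P_2$ to control; it has been absorbed into the $(Q+1)a\log X/\log x$ term of Lemma~\ref{lemmasum}. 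Optimising $K$ against $V$ in the usual way then yields the claimed exponent.
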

\begin{proof}
    We can deduce this result by modifying the proof of Theorem 1.2 of Gao and Zhao \cite{GZ}.
\end{proof}

Let $M$ and $N$ be large numbers that depends only on $\bf a$.
Define
\begin{align}\label{notations}
\begin{split}
 &\beta_{0} = 0, \;\;\;\;\; \beta_{j} = \frac{20^{j-1}}{(\log\log X)^{2}}, \;\;\; s_j=2\lfloor e^N\beta_j^{-3/4}\rfloor, \;\;\;\;\; h_{j} = \frac{s_j}{100}, \;\;\; \mbox{for} \; j \geq 1, \\
&\mathcal{J} = 1 + \max\{j : \beta_{j} \leq 10^{-M} \}.
\end{split}
\end{align}
From the definition as above, we have
\begin{equation}\label{sizeJ}
    \beta_{\mathcal{J}-1}\leq 10^{-M}\Rightarrow 20^{\mathcal{J}-2}\leq10^{-M}(\log\log X)^2\Rightarrow \mathcal{J}\ll_{\bf a} \log \log\log X.
\end{equation}
Let $\gamma(n)$ denotes the multiplicative function such that $\gamma(p^\alpha)=\alpha !$. We see that
\begin{equation}\label{gamma}
    \gamma(n^2)\geq \gamma (n).
\end{equation}
Recall that the definition of $h(n)$ in Lemma \ref{lemmasum}, for any $1\leq i\leq j\leq \mathcal{J}$, let

\begin{equation}\label{aG}
    a{\mathcal G}_{i,j}(p)=\sum_{X^{\beta_{i-1}}<q\leq X^{\beta_i}}\frac{2h(q)\chi^{(8p)}(q)}{q^{1/2+\max(\sigma-1/2,1/\log X^{\beta_j})}}\frac{\log(X^{\beta_j}/q)}{\log X^{\beta_j}}
\end{equation}
and
\begin{equation}\label{t(n,x)}
t\big(n,X^{\beta_j}\big)=\prod_{q^{\alpha}||n}\left(\frac{2h(q)\log(X^{\beta_j}/q)}{aq^{\max(\sigma-1/2,1/\log X^{\beta_j})}\log X^{\beta_j}}\right)^{\alpha}.
\end{equation}
We can observe that $t(n,x)$ is a totally multiplicative function of $n$ and
\begin{equation}\label{sizet}
|t(n,x)|\leq 1.
\end{equation}

\section{Proof of Theorem 1.1}

Dividing $0 <p \leq X$ into dyadic blocks, we see that to prove Theorem \ref{t1}, it suffices to demonstrate the result

  \begin{align}
\label{Lprodboundssmoothed}
\begin{split}
  \sum_{(p,2)=1 } & \log p \big| L\big(\sigma+it_1,\chi^{(8p)} \big) \big|^{a_1} \cdots \big| L\big(\sigma+it_{k},\chi^{(8p)}  \big) \big|^{a_{k}}\Phi \Big( \frac p{X} \Big) \\
\ll & X(\log X)^{(a_1^2+\cdots +a_{k}^2)/4} \prod_{1\leq m<\ell \leq k} \big|\zeta(1+i(t_m-t_{\ell})+\frac 1{\log X}) \big|^{a_ma_{\ell}/2}\\
&\times \big|\zeta(1+i(t_m+t_{\ell})+\frac 1{\log X}) \big|^{a_ma_{\ell}/2}\prod_{1\leq m\leq k} \big|\zeta(1+2it_m+\frac 1{\log X}) \big|^{a^2_m/4+a_m/2}
\end{split}
\end{align}
for $\sigma=1/2$. To obtain a more general theorem, we will prove the result for $\sigma\geq 1/2$.

In (\ref{lem2.4}), we take $x=X^{\beta_j}$ and cut up our main Dirichlet polynomial into smaller pieces $a{\mathcal G}_{i,j}(p)\,(1\leq i\leq j\leq \mathcal{J})$.
In terms of the size of the short Dirichlet polynomial, we define the following sets:
\begin{align}\label{sets}
  \mathcal{S}(0) =& \{ (p,2)=1 : |a{\mathcal G}_{1,l}(p)| > h_{1} \; \text{ for some } 1 \leq l \leq \mathcal{J} \} ,   \nonumber\\
 \mathcal{S}(j) =& \{ (p,2)=1  : |a{\mathcal G}_{m,l}(p)| \leq
  h_{m},  \; \mbox{for all} \; 1 \leq m \leq j \; \mbox{and} \; m \leq l \leq \mathcal{J}, \nonumber\\
 & \;\;\;\;\;\;\;\;\;\;\; \text{but }  |a{\mathcal G}_{j+1,l}(p)| > h_{j+1} \; \text{ for some } j+1 \leq l \leq \mathcal{J} \} ,  \quad\quad  1\leq j \leq \mathcal{J}, \\
 \mathcal{S}(\mathcal{J}) =& \{(p,2)=1  : |a{\mathcal G}_{m,
\mathcal{J}}(p)| \leq h_{m}, \; \forall 1 \leq m \leq \mathcal{J}\}.\nonumber
\end{align}

Obviously, we see that
\begin{equation*}
    \{p:(p,2)=1\}=\bigcup_{j=0}^{\mathcal{J}}\mathcal{S}(j).
\end{equation*}

Next, we will estimate the sum (\ref{Lprodboundssmoothed})
over the sets $\mathcal{S}(0)$ and $\mathcal{S}(j)\,(1\leq j\leq \mathcal{J})$, respectively.

\subsection{The estimate of the sum over the set $\mathcal{S}(0)$}
First, we state the following lemma:
\begin{lemma}\label{sumPhi}
Let $\Phi$ and $\mathcal{S}(0)$ be defined as in (\ref{Phi}) and (\ref{sets}). Then, we have
\begin{equation*}
\sum_{\substack{(p,2)=1 \\p \in S(0) }} (\log p)\Phi \Big( \frac p{X} \Big)\ll_{\bf a} Xe^{-(\log\log X)^2}.
\end{equation*}
\end{lemma}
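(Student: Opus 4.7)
The strategy is a Chebyshev/Markov argument using a high even moment of the short prime polynomial $a\mathcal{G}_{1,l}(p)$. Since $s_{1}$ is a positive even integer and $p\in \mathcal{S}(0)$ forces $|a\mathcal{G}_{1,l}(p)| > h_{1}$ for some $1\le l\le \mathcal{J}$, one has the pointwise bound
\[
\mathbf{1}_{p\in \mathcal{S}(0)} \;\le\; \sum_{l=1}^{\mathcal{J}} \frac{\bigl(a\mathcal{G}_{1,l}(p)\bigr)^{s_{1}}}{h_{1}^{s_{1}}}.
\]
Multiplying by $(\log p)\Phi(p/X)$ and summing, it is enough to give a uniform-in-$l$ bound on $T_{l}:=\sum_{(p,2)=1}(\log p)\bigl(a\mathcal{G}_{1,l}(p)\bigr)^{s_{1}}\Phi(p/X)$ and then multiply by $\mathcal{J}\ll \log\log\log X$ from \eqref{sizeJ}.

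Next I would expand the $s_{1}$-th power using the definition \eqref{aG}: each factor introduces a prime $q_{i}\le X^{\beta_{1}}$ with a bounded coefficient $c_{l}(q_{i})$ satisfying $|c_{l}(q)|\le a/\sqrt{q}$ (here I use $|h(q)|\le a/2$). The product becomes $\chi^{(8p)}(q_{1}q_{2}\cdots q_{s_{1}})$ times a product of coefficients. For each fixed tuple I apply Lemma \ref{asymp} with $c=q_{1}\cdots q_{s_{1}}$ and $k=0$, which produces the main term $\widehat{\Phi}(1)X$ precisely when $q_{1}\cdots q_{s_{1}}$ is a perfect square (i.e.\ every prime appears with even multiplicity), and an error $O\bigl(X^{1/2+\varepsilon}\log\log(c+2)\bigr)$ otherwise. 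Since $s_{1}\beta_{1}= O(1/\sqrt{\log\log X})$, the total number of tuples is at most $X^{o(1)}$, so the accumulated off-diagonal error is $\ll X^{1/2+2\varepsilon}=o(X)$.

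The diagonal contribution is controlled by a standard Gaussian-moment count: pairings of $s_{1}$ indices give the combinatorial factor $s_{1}!/\bigl(2^{s_{1}/2}(s_{1}/2)!\bigr)$, which by Stirling (Lemma \ref{stirling}) is $\ll (s_{1}/e)^{s_{1}/2}$, and each pair contributes $\sum_{q\le X^{\beta_{1}}}c_{l}(q)^{2}\le a^{2}\sum_{q\le X^{\beta_{1}}}1/q\ll a^{2}\log\log X$ by Mertens (Lemma \ref{M}(a)). Combining gives
\[
T_{l} \;\ll\; X\,\bigl(C a^{2}\, s_{1}\log\log X\bigr)^{s_{1}/2}.
\]
Dividing by $h_{1}^{s_{1}} = (s_{1}/100)^{s_{1}}$ and inserting $s_{1}\asymp e^{N}(\log\log X)^{3/2}$ leaves a bound of the shape
\[
\sum_{p\in \mathcal{S}(0)}(\log p)\Phi(p/X) \;\ll\; X\,\mathcal{J}\,\Bigl(\tfrac{10^{4}C a^{2}\log\log X}{s_{1}}\Bigr)^{s_{1}/2}\;\ll\; X\,\bigl(e^{-N}(\log\log X)^{-1/2}\bigr)^{s_{1}/2},
\]
and choosing $N=N(\mathbf{a})$ sufficiently large makes the factor in parentheses $\le e^{-(\log\log X)^{2}/s_{1}}\cdot (\text{slack})$, yielding the claimed decay $Xe^{-(\log\log X)^{2}}$.

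The main technical obstacle will be bookkeeping: keeping the combinatorial pairing bound sharp (using $\gamma(n^{2})\ge \gamma(n)$ from \eqref{gamma}), controlling the harmless but bulky off-diagonal terms uniformly across all $l\le \mathcal{J}$, and verifying that $s_{1}\beta_{1}$ is genuinely small enough (thanks to the exponent $3/4$ in the definition of $s_{j}$) so that $X^{s_{1}\beta_{1}}$ is absorbed into $X^{\varepsilon}$. Once these are checked, the inequality $x\le -\log(1-x)$ that converted the $q=p$ exceptional term in Lemma \ref{lemmasum} into $(1-1/2p)^{-k}$ is exactly what lets us apply Lemma \ref{asymp} cleanly.
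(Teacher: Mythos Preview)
Your overall strategy is exactly the paper's: apply a Chebyshev/Rankin inequality with a high even moment of $a\mathcal{G}_{1,l}(p)$, expand the power, insert Lemma~\ref{asymp}, and separate the square (diagonal) terms from the rest. The combinatorics you outline (multinomial expansion, $\gamma(n^2)\ge\gamma(n)$, Mertens, Stirling) match the paper's computations \eqref{r1}--\eqref{r2} essentially line for line.

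There is, however, a genuine quantitative gap in your choice of moment exponent. You take the exponent to be $s_1=2\lfloor e^N\beta_1^{-3/4}\rfloor\asymp e^N(\log\log X)^{3/2}$, whereas the paper takes the larger exponent $R=2\lfloor 1/(100\beta_1)\rfloor\asymp (\log\log X)^2/50$. With your choice, your own final display gives
\[
\mathcal{J}\Big(\tfrac{10^4Ca^2\log\log X}{s_1}\Big)^{s_1/2}
\;\ll\;\exp\Big(-c\,e^N(\log\log X)^{3/2}\big(N+\tfrac12\log\log\log X\big)\Big),
\]
and for \emph{fixed} $N$ this is only $\exp\big(-c'(\log\log X)^{3/2}\log\log\log X\big)$, which does \emph{not} reach the stated $e^{-(\log\log X)^2}$; the inequality $e^N(N+\tfrac12\log\log\log X)\ge (\log\log X)^{1/2}$ that you would need fails for large $X$. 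The paper's larger exponent $R$ (still satisfying $R\beta_1\le 1/50$, so the off-diagonal error $X^{1/2+\varepsilon}\cdot X^{R\beta_1/2}$ remains harmless) gives decay like $(\text{const})^{-R/2}=\exp(-c(\log\log X)^2)$, which is exactly what is claimed. So the fix is simply to replace $s_1$ by $R=2\lfloor 1/(100\beta_1)\rfloor$ in your argument; everything else you wrote goes through unchanged. (Your weaker bound would in fact suffice for the Cauchy--Schwarz step that follows, since it still beats any fixed power of $\log X$, but it does not prove the lemma as stated.)

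A minor remark: your closing sentence about $(1-1/2p)^{-k}$ and Lemma~\ref{lemmasum} is misplaced here---that device is used later in \S3.2, not in the proof of this lemma, where one applies Lemma~\ref{asymp} with $k=0$ directly.
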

\begin{proof}
    By the definition of $\mathcal{S}(0)$ and Rankin's trick, we get
\begin{equation*}
    \sum_{\substack{(p,2)=1 \\p \in S(0) }} (\log p)\Phi \Big( \frac p{X} \Big)\leq \sum_{(p,2)=1}\sum_{l=1}^{\mathcal{J}}\big(h_1^{-1}|a\mathcal{G}_{1,l}(p)|\big)^{R}(\log p)\Phi \Big( \frac p{X} \Big),
\end{equation*}
where we choose
\begin{equation*}
    R=2\left\lfloor \frac{1}{100\beta_1}\right\rfloor.
\end{equation*}
By (\ref{notations}), (\ref{aG}) and (\ref{t(n,x)}), we obtain
\begin{align*}
&\sum_{(p,2)=1}\sum_{l=1}^{\mathcal{J}}\big(h_1^{-1}|a\mathcal{G}_{1,l}(p)|\big)^{2\left\lfloor \frac{1}{100\beta_1}\right\rfloor}(\log p)\Phi \Big( \frac p{X} \Big)\\
   = &  \sum_{l=1}^{\mathcal{J}} \left(\frac{50a}{\lfloor e^N\beta_1^{-3/4}\rfloor}\right)^{2\left\lfloor \frac{1}{100\beta_1}\right\rfloor}\sum_{\substack{n\\ \Omega(n)=2\left\lfloor \frac{1}{100\beta_1}\right\rfloor\\ q|n\Rightarrow X^{\beta_0}<q\leq  X^{\beta_1}}}\frac{\Big(2\left\lfloor \frac{1}{100\beta_1}\right\rfloor\Big)!\,\,t\big(n,X^{\beta_l}\big)}{\gamma(n) n^{1/2}}\sum_{(p,2)=1}(\log p)\chi^{(8p)}(n)\Phi \Big( \frac p{X} \Big).
\end{align*}
We apply Lemma \ref{asymp} to estimate the innermost sum over $p$ mentioned above. By (\ref{notations}), (\ref{sizeJ}) and (\ref{sizet}), the contribution from the error term is
\begin{align}\label{r1}
    &\ll X^{1/2+\varepsilon}\mathcal{J} \left(\frac{50a}{\lfloor e^N\beta_1^{-3/4}\rfloor}\right)^{2\left\lfloor \frac{1}{100\beta_1}\right\rfloor}\max_{1\leq l\leq \mathcal{J}}\sum_{\substack{n\\ \Omega(n)=2\left\lfloor \frac{1}{100\beta_1}\right\rfloor\\ q|n\Rightarrow X^{\beta_0}<q\leq  X^{\beta_1}}}\frac{\left(2\left\lfloor \frac{1}{100\beta_1}\right\rfloor\right)!\,\,t\big(n,X^{\beta_l}\big)\log \log (n+2)}{\gamma(n) n^{1/2}}\nonumber\\
    &\ll X^{1/2+\varepsilon}\mathcal{J}\left(\frac{50a}{\lfloor e^N\beta_1^{-3/4}\rfloor}\right)^{2\left\lfloor \frac{1}{100\beta_1}\right\rfloor}\sum_{\substack{n\\ \Omega(n)=2\left\lfloor \frac{1}{100\beta_1}\right\rfloor\\ q|n\Rightarrow X^{\beta_0}<q\leq  X^{\beta_1}}}\frac{\left(2\left\lfloor \frac{1}{100\beta_1}\right\rfloor\right)!}{\gamma(n) n^{1/2-\varepsilon}}\nonumber\\
    &=X^{1/2+\varepsilon}\mathcal{J}\left(\frac{50a}{\lfloor e^N\beta_1^{-3/4}\rfloor}\right)^{2\left\lfloor \frac{1}{100\beta_1}\right\rfloor}\left(\sum_{ X^{\beta_0}<q\leq  X^{\beta_1}}\frac{1}{q^{1/2-\varepsilon}}\right)^{2\left\lfloor \frac{1}{100\beta_1}\right\rfloor}\nonumber\\
    &\ll X^{1/2+1/100+\varepsilon}\mathcal{J}\left(\frac{50a}{\lfloor e^N\beta_1^{-3/4}\rfloor}\right)^{2\left\lfloor \frac{1}{100\beta_1}\right\rfloor}\ll_{\bf a} Xe^{-(\log\log X)^2}.
\end{align}
The contribution from the main term is
\begin{equation}\label{mainterm}
    \ll X\mathcal{J}\left(\frac{50a}{\lfloor e^N\beta_1^{-3/4}\rfloor}\right)^{2\left\lfloor \frac{1}{100\beta_1}\right\rfloor}\max_{1\leq l\leq \mathcal{J}}\sum_{\substack{n=\square\\ \Omega(n)=2\left\lfloor \frac{1}{100\beta_1}\right\rfloor\\ q|n\Rightarrow X^{\beta_0}<q\leq  X^{\beta_1}}}\frac{\left(2\left\lfloor \frac{1}{100\beta_1}\right\rfloor\right)!\,\,t\big(n,X^{\beta_l}\big)}{\gamma(n) n^{1/2}}.
\end{equation}
Since function $t\big(n,X^{\beta_l}\big)$ is totally multiplicative of $n$, and by (\ref{gamma}) and (\ref{sizet}), we get that (\ref{mainterm}) is
\begin{align}\label{mainterm2}
    &= X\mathcal{J}\left(\frac{50a}{\lfloor e^N\beta_1^{-3/4}\rfloor}\right)^{2\left\lfloor \frac{1}{100\beta_1}\right\rfloor}\max_{1\leq l\leq \mathcal{J}}\sum_{\substack{\Omega(n)=\left\lfloor \frac{1}{100\beta_1}\right\rfloor\\ q|n\Rightarrow X^{\beta_0}<q\leq  X^{\beta_1}}}\frac{\left(2\left\lfloor \frac{1}{100\beta_1}\right\rfloor\right)!\,\,t^2\big(n,X^{\beta_l}\big)}{\gamma(n^2) n}\nonumber\\
    &\ll X\mathcal{J}\left(\frac{50a}{\lfloor e^N\beta_1^{-3/4}\rfloor}\right)^{2\left\lfloor \frac{1}{100\beta_1}\right\rfloor}\sum_{\substack{\Omega(n)=\left\lfloor \frac{1}{100\beta_1}\right\rfloor\\ q|n\Rightarrow X^{\beta_0}<q\leq  X^{\beta_1}}}\frac{\left(2\left\lfloor \frac{1}{100\beta_1}\right\rfloor\right)!}{\gamma(n) n}\nonumber\\
    &=X\mathcal{J}\left(\frac{50a}{\lfloor e^N\beta_1^{-3/4}\rfloor}\right)^{2\left\lfloor \frac{1}{100\beta_1}\right\rfloor}\frac{\left(2\left\lfloor \frac{1}{100\beta_1}\right\rfloor\right)!}{(\left\lfloor \frac{1}{100\beta_1}\right\rfloor)!}\left(\sum_{ X^{\beta_0}<q\leq  X^{\beta_1}}\frac{1}{q}\right)^{\left\lfloor \frac{1}{100\beta_1}\right\rfloor}.
\end{align}
By Lemma \ref{M} $(a)$, Lemma \ref{stirling}, (\ref{notations}) and (\ref{sizeJ}), we can calculate (\ref{mainterm2}) is
\begin{align}\label{r2}
    &\ll X\mathcal{J}e^{-\left\lfloor \frac{1}{100\beta_1}\right\rfloor}\left(\frac{\log\log X}{25}\left(\frac{50a}{e^N}\right)^2\beta_1^{1/2}\right)^{\left\lfloor \frac{1}{100\beta_1}\right\rfloor}
    \ll X \mathcal{J} \Big(e\cdot\frac{e^{2N}}{100a^2}\Big)^{-\left\lfloor \frac{(\log\log X)^2}{100}\right\rfloor}\nonumber\\
    &\ll_{\bf a} Xe^{-(\log\log X)^2},
\end{align}
where we choose $N$ to be large enough. According to (\ref{r1}) and (\ref{r2}), We complete the proof of this lemma.
\end{proof}

Using Cauchy-Schwarz inequality, we have
\begin{align*}
\begin{split}
  & \sum_{\substack{(p,2)=1 \\p \in S(0) }}  \log p\big| L\big(\sigma+it_1,\chi^{(8p)} \big) \big|^{a_1} \cdots \big| L\big(\sigma+it_{k},\chi^{(8p)}  \big) \big|^{a_{k}} \Phi \Big( \frac p{X} \Big) \\
\leq &  \Big ( \sum_{\substack{(p,2)=1 \\p \in S(0) }} (\log p)\Phi \Big( \frac p{X} \Big)  \Big )^{1/2} \Big (
 \sum_{\substack{(p,2)=1 }}\log p\big| L\big(\sigma+it_1,\chi^{(8p)} \big) \big|^{2a_1} \cdots \big| L\big(\sigma+it_{k},\chi^{(8p)}  \big) \big|^{2a_{k}} \Phi \Big( \frac p{X} \Big) \Big)^{1/2}.
\end{split}
\end{align*}
Then, by Lemma \ref{sumPhi} and Lemma \ref{rude}, the above summation is $\ll_{\bf a} X$. Naturally, we have
\begin{align*}
& \sum_{\substack{(p,2)=1 \\p \in S(0) }}  \log p\big| L\big(\sigma+it_1,\chi^{(8p)} \big) \big|^{a_1} \cdots \big| L\big(\sigma+it_{k},\chi^{(8p)}  \big) \big|^{a_{k}} \Phi \Big( \frac p{X} \Big)\\
   \ll_{\bf a} & X(\log X)^{(a_1^2+\cdots +a_{k}^2)/4} \prod_{1\leq m<\ell \leq k} \big|\zeta(1+i(t_m-t_{\ell})+\frac 1{\log X}) \big|^{a_ma_{\ell}/2}\\
   &\times \big|\zeta(1+i(t_m+t_{\ell})+\frac 1{\log X}) \big|^{a_ma_{\ell}/2}\prod_{1\leq m\leq k} \big|\zeta(1+2it_m+\frac 1{\log X}) \big|^{a^2_m/4+a_m/2}.
\end{align*}

\subsection{The estimate of the sum over the set $\mathcal{S}(j)$}

 Fix a $j$ with $1 \leq j \leq \mathcal{J}$. From Lemma \ref{lemmasum}, we have
\begin{align*}
 &\sum_{\substack{(p,2)=1 \\p \in \mathcal{S}(j)}}\log p\big| L  \big(\sigma+it_1,  \chi^{(8p)} \big) \big|^{a_1} \cdots \big| L\big(\sigma+it_k,\chi^{(8p)} \big) \big|^{a_k}\Phi\Big(\frac{p}{X}\Big) \nonumber\\
\ll & \sum_{\substack{(p,2)=1 \\p \in \mathcal{S}(j)}}\exp \Big(
 a\sum^j_{i=1}{\mathcal G}_{i,j}(p)\Big )\exp \left(\sum_{p\leq X^{\beta_j/2}} \frac{h(q^2)}{q^{1+2\max(\sigma-1/2,1/\log X^{\beta_j})}} \right) \nonumber\\
 &\quad\quad\quad \times\exp \left(\frac {(Q+1)a}{\beta_j} \right)(\log p)\left(1-\frac{1}{2p}\right)^{-a}\Phi \Big(\frac{p}{X}\Big).
\end{align*}
From the above inequality, the definition of $\mathcal{S}(j)$, and Rankin's trick, we obtain
\begin{align}
\label{Sj1}
 &\sum_{\substack{(p,2)=1 \\p \in \mathcal{S}(j) }}\log p\big| L  \big(\sigma+it_1,  \chi^{(8p)} \big) \big|^{a_1} \cdots \big| L\big(\sigma+it_{k},\chi^{(8p)}  \big) \big|^{a_{k}}\Phi\Big(\frac {p}{X}\Big)\nonumber \\
\ll & \sum_{\substack{(p,2)=1 \\p \in \mathcal{S}(j) }}\sum_{t=j+1}^{\mathcal{J}}(h_{j+1}^{-1}|a\mathcal{G}_{j+1,t}(p)|)^{2\big\lfloor\frac{1}{100\beta_{j+1}}\big\rfloor} \exp \Big (a\sum^j_{i=1}{\mathcal G}_{i,j}(p)\Big )\exp \left(\sum_{p\leq X^{\beta_j/2}} \frac{h(q^2)}{q^{1+2\max(\sigma-1/2,1/\log X^{\beta_j})}} \right)\nonumber\\
 & \quad \quad \times\exp \left(\frac {(Q+1)a}{\beta_j} \right)(\log p)\left(1-\frac{1}{2p}\right)^{-a}\Phi \Big( \frac p{X} \Big).
\end{align}
Using Taylor's formula with integral remainder and Stirling's formula (Lemma \ref{stirling}), we can deduce that for any $z \in \mathbb{C}$,
\begin{align}\label{ezrelation}
  e^z=\sum^{n-1}_{s=0}\frac {z^s}{s!}+O\Big( \frac {|z|^n}{n!}e^{|z|}e^z \Big).
 \end{align}
 Let $z=a {\mathcal G}_{i,j}(p)$, $n=s_i+1$ in (\ref{ezrelation}). Since $|a {\mathcal G}_{i,j}(p)| \leq h_i$ for $p \in \mathcal{S}(j)$, we get
\begin{equation}\label{exp}
    \exp \Big ( a {\mathcal G}_{i,j}(p) \Big )  =\sum^{s_i}_{s=0}\frac {( a{\mathcal G}_{i,j}(p)) ^s}{s!} \left( 1+   O(e^{-s_i}) \right).
\end{equation}
Inserting (\ref{exp}) into (\ref{Sj1}), we obtain
 \begin{align}\label{Sj2}
 &\sum_{\substack{(p,2)=1 \\p \in \mathcal{S}(j) }}\log p\big| L  \big(\sigma+it_1,  \chi^{(8p)} \big) \big|^{a_1} \cdots \big| L\big(\sigma+it_{k},\chi^{(8p)}  \big) \big|^{a_{k}}\Phi\Big(\frac {p}{X}\Big) \nonumber\\
 \ll & \sum_{\substack{(p,2)=1 \\p \in \mathcal{S}(j) }}\sum_{t=j+1}^{\mathcal{J}}(h_{j+1}^{-1}|a\mathcal{G}_{j+1,t}(p)|)^{2\big\lfloor\frac{1}{100\beta_{j+1}}\big\rfloor} \prod_{i=1}^{j}\left(\sum^{s_i}_{s=0}\frac {( a{\mathcal G}_{i,j}(p)) ^s}{s!}\right)\nonumber\\
 &\times\exp \left(\sum_{p\leq X^{\beta_j/2}} \frac{h(q^2)}{q^{1+2\max(\sigma-1/2,1/\log X^{\beta_j})}} \right) \exp \left(\frac {(Q+1)a}{\beta_j} \right)(\log p)\left(1-\frac{1}{2p}\right)^{-a}\Phi \Big( \frac p{X} \Big).
 \end{align}
By (\ref{notations}), (\ref{aG}) and (\ref{t(n,x)}), we calculate that the Dirichlet polynomial
\begin{align}\label{poly1}
    &(h_{j+1}^{-1}|a\mathcal{G}_{j+1,t}(p)|)^{2\big\lfloor\frac{1}{100\beta_{j+1}}\big\rfloor} \nonumber\\
    =&\left(\frac{50a}{\big\lfloor e^N\beta^{-3/4}_{j+1}\big\rfloor}\right)^{2\big\lfloor\frac{1}{100\beta_{j+1}}\big\rfloor}\sum_{\substack{n\\ \Omega(n)=2\big\lfloor\frac{1}{100\beta_{j+1}}\big\rfloor\\ q|n\Rightarrow X^{\beta_{j}}<q\leq X^{\beta_{j+1}}}}\frac{(2\big\lfloor\frac{1}{100\beta_{j+1}}\big\rfloor)!\,\,t\big(n,X^{\beta_t}\big)}{\gamma(n) n^{1/2}}
    \chi^{(8p)}(n)
\end{align}
and
 \begin{align}\label{poly2}
    \prod_{i=1}^{j}\left(\sum^{s_i}_{s=0}\frac {( a{\mathcal G}_{i,j}(p)) ^s}{s!}\right)
    &=\prod_{i=1}^{j}\sum_{s=0}^{s_i} \left(\frac{a^s}{s!}\right)\left(\sum_{X^{\beta_{i-1}}<q\leq X^{\beta_i}}\frac{t\big(q,X^{\beta_j}\big)\chi^{(8p)}(q)}{q^{1/2}}\right)^{s}\nonumber\\
    &=\prod_{i=1}^{j}\sum_{\substack{n_i\\ \Omega(n_i)\leq s_i\\ q|n_i\Rightarrow X^{\beta_{i-1}}<q\leq X^{\beta_i}}}\frac{a^{\Omega(n_i)}\,\,t\big(n_i,X^{\beta_j}\big)}{\gamma(n_i) n_i^{1/2}} \chi^{(8p)}(n_i).
\end{align}
We observe that $(n,n_i)=1$ for any $1\leq i\leq j$. By inserting these two formulas (\ref{poly1}) and (\ref{poly2}), back into (\ref{Sj2}), we get
\begin{align}\label{Sj3}
 &\sum_{\substack{(p,2)=1 \\p \in \mathcal{S}(j) }}\log p\big| L  \big(\sigma+it_1,  \chi^{(8p)} \big) \big|^{a_1} \cdots \big| L\big(\sigma+it_{k},\chi^{(8p)}  \big) \big|^{a_{k}}\Phi\Big(\frac {p}{X}\Big) \nonumber\\
 \ll & \sum_{t=j+1}^{\mathcal{J}}\left(\frac{50a}{\big\lfloor e^N\beta^{-3/4}_{j+1}\big\rfloor}\right)^{2\big\lfloor\frac{1}{100\beta_{j+1}}\big\rfloor}
 \sum_{\substack{n\\ \Omega(n)=2\big\lfloor\frac{1}{100\beta_{j+1}}\big\rfloor\\ q|n\Rightarrow X^{\beta_j}<q\leq X^{\beta_{j+1}}}}\frac{(2\big\lfloor\frac{1}{100\beta_{j+1}}\big\rfloor)!\,\,t\big(n,X^{\beta_t}\big)}{\gamma(n) n^{1/2}}\nonumber\\
   & \times\prod_{i=1}^{j}\left(\sum_{\substack{n_i\\ \Omega(n_i)\leq s_i\\ q|n_i\Rightarrow X^{\beta_{i-1}}<q\leq X^{\beta_i}}}\frac{a^{\Omega(n_i)}\,\,t\big(n_i,X^{\beta_j}\big)}{\gamma(n_i) n_i^{1/2}} \right)
   \exp \left(\sum_{q\leq X^{\beta_j/2}} \frac{h(q^2)}{q^{1+2\max(\sigma-1/2,1/\log X^{\beta_j})}} \right) \nonumber\\
   &\times\exp \left(\frac {(Q+1)a}{\beta_j} \right)
 \sum_{(p,2)=1 }(\log p)\left(1-\frac{1}{2p}\right)^{-a}\chi^{(8p)}(nn_i)\Phi \Big( \frac p{X} \Big).
 \end{align}
Applying Lemma \ref{asymp} to estimate the innermost sum of (\ref{Sj3}). By (\ref{sizeJ}) and (\ref{sizet}), the contribution of the error term is
\begin{align}\label{error1}
\ll& X^{1/2+\varepsilon}\left(\frac{50a}{\big\lfloor e^N\beta^{-3/4}_{j+1}\big\rfloor}\right)^{2\big\lfloor\frac{1}{100\beta_{j+1}}\big\rfloor}\sum_{n\leq X^{1/50}}\frac{(2\big\lfloor\frac{1}{100\beta_{j+1}}\big\rfloor)!}{n^{1/2}}\sum_{n'\leq X^{\sum_{i=1}^{\mathcal{J}}\beta_i s_i}}\frac{a^{\sum_{i=1}^{\mathcal{J}} s_i}}{n'^{1/2}}\nonumber\\
&\times\exp \left(\sum_{q\leq X^{\beta_j/2}} \frac{h(q^2)}{q^{1+2\max(\sigma-1/2,1/\log X^{\beta_j})}} \right) \exp \left(\frac {(Q+1)a}{\beta_j} \right).
\end{align}
From (\ref{notations}), we estimate that
\begin{equation*}
    \sum_{1\leq i\leq \mathcal{J}}\beta_i s_i\leq 50e^N\cdot 10^{-M/4},\quad \sum_{1\leq i\leq \mathcal{J}}s_i\leq 2e^N(\log\log X)^{3/2}.
\end{equation*}
Hence, in (\ref{error1}), the sum over $n'$ is also a short Dirichlet polynomial. Then, the error term is
\begin{align}\label{error2}
    & \ll X^{1/2+\varepsilon} \left(a^{2e^N(\log\log X)^{3/2}}\left(\frac{50a}{\big\lfloor e^N\beta^{-3/4}_{j+1}\big\rfloor}\right)^{2\big\lfloor\frac{1}{100\beta_{j+1}}\big\rfloor}\Big(2\big\lfloor\frac{1}{100\beta_{j+1}}\big\rfloor\Big)!\right)\nonumber\\
    &\quad\times\left(\sum_{n\leq X^{1/20}}\frac{1}{n^{1/2}}\right)\exp \left(\sum_{q\leq X^{\beta_j/2}} \frac{h(q^2)}{q^{1+2\max(\sigma-1/2,1/\log X^{\beta_j})}} \right) \exp \left(\frac {(Q+1)a}{\beta_j} \right).
\end{align}
By Lemma \ref{stirling} and (\ref{notations}), we can calculate that
\begin{equation*}
    a^{2e^N(\log\log X)^{3/2}}\left(\frac{50a}{\big\lfloor e^N\beta^{-3/4}_{j+1}\big\rfloor}\right)^{2\big\lfloor\frac{1}{100\beta_{j+1}}\big\rfloor}\Big(2\big\lfloor\frac{1}{100\beta_{j+1}}\big\rfloor\Big)!\ll_{\bf a} X^{\varepsilon}.
\end{equation*}
Using this estimation, (\ref{error2}) is
\begin{align}\label{error3}
 & \ll_{\bf a} X^{1/2+1/40+\varepsilon}\exp \left(\sum_{q\leq X^{\beta_j/2}} \frac{h(q^2)}{q^{1+2\max(\sigma-1/2,1/\log X^{\beta_j})}} \right) \exp \left(\frac {(Q+1)a}{\beta_j} \right).
 \end{align}
The contribution of the main term of the innermost sum in (\ref{Sj3}) is
\begin{align}\label{Main}
&\ll X\sum_{t=j+1}^{\mathcal{J}}\left(\frac{50a}{\big\lfloor e^N\beta^{-3/4}_{j+1}\big\rfloor}\right)^{2\big\lfloor\frac{1}{100\beta_{j+1}}\big\rfloor}\left(\sum_{\substack{n=\square\\ \Omega(n)=2\big\lfloor\frac{1}{100\beta_{j+1}}\big\rfloor\\ q|n\Rightarrow X^{\beta_j}<q\leq X^{\beta_{j+1}}}}\frac{(2\big\lfloor\frac{1}{100\beta_{j+1}}\big\rfloor)!\,\,t\big(n,X^{\beta_t}\big)}{\gamma(n) n^{1/2}}\right)\nonumber\\
 &\times\prod_{i=1}^{j}\left(\sum_{\substack{n_i=\square\\ \Omega(n_i)\leq s_i\\ q|n_i\Rightarrow X^{\beta_{i-1}}<q\leq X^{\beta_i}}}\frac{a^{\Omega(n_i)}\,\,t\big(n_i,X^{\beta_j}\big)}{\gamma(n_i) n_i^{1/2}} \right)\exp \left(\sum_{q\leq X^{\beta_j/2}} \frac{h(q^2)}{q^{1+2\max(\sigma-1/2,1/\log X^{\beta_j})}} \right) \exp \left(\frac {(Q+1)a}{\beta_j} \right).
\end{align}
Observe that
\begin{align}\label{sum1}
&\sum_{\substack{n_i=\square\\ \Omega(n_i)\leq s_i\\ q|n_i\Rightarrow X^{\beta_{i-1}}<q\leq X^{\beta_i}}}\frac{a^{\Omega(n_i)}\,\,t\big(n_i,X^{\beta_j}\big)}{\gamma(n_i) n_i^{1/2}}\nonumber\\
=&\sum_{\substack{n_i=\square\\q|n_i\Rightarrow X^{\beta_{i-1}}<q\leq X^{\beta_i}}}\frac{a^{\Omega(n_i)}\,\,t\big(n_i,X^{\beta_j}\big)}{\gamma(n_i) n_i^{1/2}}-\sum_{\substack{n_i=\square\\ \Omega(n_i)> s_i\\ q|n_i\Rightarrow X^{\beta_{i-1}}<q\leq X^{\beta_i}}}\frac{a^{\Omega(n_i)}\,\,t\big(n_i,X^{\beta_j}\big)}{\gamma(n_i) n_i^{1/2}}.
\end{align}
For the first sum of (\ref{sum1}), we can write the summation as products over primes
\begin{align}\label{sum11}
    \sum_{\substack{n_i=\square\\q|n_i\Rightarrow X^{\beta_{i-1}}<q\leq X^{\beta_i}}}\frac{a^{\Omega(n_i)}\,\,t\big(n_i,X^{\beta_j}\big)}{\gamma(n_i) n_i^{1/2}}&=\sum_{\substack{n_i\\q|n_i\Rightarrow X^{\beta_{i-1}}<q\leq X^{\beta_i}}}\frac{a^{2\Omega(n_i)}\,\,t^2\big(n_i,X^{\beta_j}\big)}{\gamma(n_i^2) n_i}\nonumber\\
    &=\prod_{X^{\beta_{i-1}}<q\leq X^{\beta_i}}\left(1+\frac{a^2t^2(q,X^{\beta_j})}{2q}+O\Big(\frac{1}{q^2}\Big)\right)\nonumber\\
    &\leq \exp\left(\sum_{X^{\beta_{i-1}}<q\leq X^{\beta_i}}\left(\frac{a^2t^2(q,X^{\beta_j})}{2q}+O\Big(\frac{1}{q^2}\Big)\right)\right).
\end{align}
The last expression above follows from the well-known inequality $1+x\leq e^x$ for any $x\in\mathbb{R}$. By (\ref{t(n,x)}), we can write
\begin{equation*}
    t(q, X^{\beta_j})=\frac{2h(q)}{aq^{\max(\sigma-1/2,1/\log X^{\beta_j})}}\Big(1+O\Big(\frac{\log q}{\log X^{\beta_j}}\Big)\Big),\quad q\leq X^{\beta_j}.
\end{equation*}
Hence, the expression (\ref{sum11}) is
\begin{equation}\label{sum12}
    \leq \exp\left(\sum_{X^{\beta_{i-1}}<q\leq X^{\beta_i}}\frac{2h^2(q)}{q^{1+2\max(\sigma-1/2,1/\log X^{\beta_{j}})}}+O\Bigg(\sum_{X^{\beta_{i-1}}<q\leq X^{\beta_i}}\Big(\frac{\log q}{q\log X^{\beta_j}}+\frac{1}{q^2}\Big)\Bigg)\right).
\end{equation}
By Rankin's trick and the similar arguments of getting (\ref{sum12}), for the second term of (\ref{sum1}), we have
\begin{align}\label{sum22}
    &\sum_{\substack{n_i=\square\\ \Omega(n_i)> s_i\\ q|n_i\Rightarrow X^{\beta_{i-1}}<q\leq X^{\beta_i}}}\frac{a^{\Omega(n_i)}\,\,t\big(n_i,X^{\beta_j}\big)}{\gamma(n_i) n_i^{1/2}}
    =\sum_{\substack{n_i\\ \Omega(n_i)> s_i/2\\ q|n_i\Rightarrow X^{\beta_{i-1}}<q\leq X^{\beta_i}}}\frac{a^{2\Omega(n_i)}\,\,t^2\big(n_i,X^{\beta_j}\big)}{\gamma(n_i^2) n_i}\nonumber\\
    &\leq \sum_{\substack{n_i\\  q|n_i\Rightarrow X^{\beta_{i-1}}<q\leq X^{\beta_i}}}\frac{e^{\Omega(n_i)-s_i/2}a^{2\Omega(n_i)}\,\,t^2\big(n_i,X^{\beta_j}\big) }{\gamma(n_i^2) n_i}\nonumber\\
    &=e^{-s_i/2}\prod_{X^{\beta_{i-1}}<q\leq X^{\beta_i}}\left(1+\frac{ea^2t^2(q,X^{\beta_j})}{2q}+O\Big(\frac{1}{q^2}\Big)\right)\nonumber\\
    &\ll e^{-s_i/2}\exp\left(\sum_{X^{\beta_{i-1}}<q\leq X^{\beta_i}}\frac{2h^2(q)}{q^{1+2\max(\sigma-1/2,1/\log X^{\beta_{j}})}}+O\Bigg(\sum_{X^{\beta_{i-1}}<q\leq X^{\beta_i}}\Big(\frac{\log q}{q\log X^{\beta_j}}+\frac{1}{q^2}\Big)\Bigg)\right).
\end{align}
From (\ref{sum1}), (\ref{sum12}) and (\ref{sum22}), we have
\begin{align}\label{sum10}
     &\prod_{i=1}^{j}\left(\sum_{\substack{n_i=\square\\ \Omega(n_i)\leq s_i\\ q|n_i\Rightarrow X^{\beta_{i-1}}<q\leq X^{\beta_i}}}\frac{a^{\Omega(n_i)}\,\,t\big(n_i,X^{\beta_j}\big)}{\gamma(n_i) n_i^{1/2}} \right)\nonumber\\
     \leq & \prod_{i=1}^{j}(1+O(e^{-s_i/2}))\nonumber\\
     &\times\exp\left(\sum_{X^{\beta_{i-1}}<q\leq X^{\beta_i}}\frac{2h^2(q)}{q^{1+2\max(\sigma-1/2,1/\log X^{\beta_{j}})}}+O\Bigg(\sum_{X^{\beta_{i-1}}<q\leq X^{\beta_i}}\Big(\frac{\log q}{q\log X^{\beta_j}}+\frac{1}{q^2}\Big)\Bigg)\right)\nonumber\\
      \ll & \exp\left(\sum_{q\leq X^{\beta_j}}\frac{2h^2(q)}{q^{1+2\max(\sigma-1/2,1/\log X^{\beta_{j}})}}+O\Bigg(\sum_{q\leq X^{\beta_j}}\Big(\frac{\log q}{q\log X^{\beta_j}}+\frac{1}{q^2}\Big)\Bigg)\right)\nonumber\\
      \ll & \exp\left(\sum_{q\leq X^{\beta_j}}\frac{2h^2(q)}{q^{1+2\max(\sigma-1/2,1/\log X^{\beta_{j}})}}\right).
\end{align}
Here the last step, we use Lemma \ref{M} (b). Choose an appropriately large $N$. By the same arguments of getting (\ref{mainterm2}), we have
\begin{align}\label{sum2}
&\left(\frac{50a}{\big\lfloor e^N\beta^{-3/4}_{j+1}\big\rfloor}\right)^{2\big\lfloor\frac{1}{100\beta_{j+1}}\big\rfloor}\left(\sum_{\substack{n=\square\\ \Omega(n)=2\big\lfloor\frac{1}{100\beta_{j+1}}\big\rfloor\\ q|n\Rightarrow X^{\beta_j}<q\leq X^{\beta_{j+1}}}}\frac{(2\big\lfloor\frac{1}{100\beta_{j+1}}\big\rfloor)!\,\,t\big(n,X^{\beta_t}\big)}{\gamma(n) n^{1/2}}\right)\nonumber\\
\ll& \Big(\frac{e^{2N+1}}{10^3a^2\beta_{j+1}^{1/2}}\Big)^{-\big\lfloor \frac{1}{100\beta_{j+1}}\big\rfloor}\ll_{\bf a}
e^{-10(Q+1)a/\beta_j}.
\end{align}
Combining (\ref{sizeJ}), (\ref{Sj3}), (\ref{error2}), (\ref{Main}), (\ref{sum10}) and (\ref{sum2}), we obtain
\begin{align}\label{Sj4}
    &\sum_{\substack{(p,2)=1 \\p \in \mathcal{S}(j) }}\log p\big| L  \big(\sigma+it_1,  \chi^{(8p)} \big) \big|^{a_1} \cdots \big| L\big(\sigma+it_{k},\chi^{(8p)}  \big) \big|^{a_{k}}\Phi\Big(\frac {p}{X}\Big) \nonumber\\
 \ll & X\exp\left(\sum_{q\leq X^{\beta_j}}\frac{2h^2(q)}{q^{1+2\max(\sigma-1/2,1/\log X^{\beta_{j}})}}\right)\nonumber\\
 &\times\exp \left(\sum_{q\leq X^{\beta_j/2}} \frac{h(q^2)}{q^{1+2\max(\sigma-1/2,1/\log X^{\beta_j})}} \right) \exp \left(-\frac {(Q+1)a}{\beta_j} \right).
\end{align}
We know that $1-1/x<\log x$ for any $x\geq 1$. Observe that $|h(q)|\ll_{\bf a} 1$, by Lemma \ref{M} (b), we get
\begin{align}\label{h1}
    \sum_{q\leq X^{\beta_j}}\frac{2h^2(q)}{q^{1+2\max(\sigma-1/2,1/\log X^{\beta_{j}})}}&\leq \sum_{q\leq X^{\beta_j}}\frac{2h^2(q)}{q^{1+2/\log X^{\beta_j}}}=\sum_{q\leq X^{\beta_j}}\frac{2h^2(q)}{q}\left(1-\frac{q^{2/\log X^{\beta_j}}-1}{q^{2/\log X^{\beta_j}}}\right)\nonumber\\
    &=\sum_{q\leq X^{\beta_j}}\frac{2h^2(q)}{q}+O\left(\sum_{q\leq X^{\beta_j}}\frac{2h^2(q)}{q}\left|1-\frac{1}{q^{2/\log X^{\beta_j}}}\right|\right)\nonumber\\
    &=\sum_{q\leq X^{\beta_j}}\frac{2h^2(q)}{q}+O\left(\sum_{q\leq X^{\beta_j}}\frac{\log q}{q\log X^{\beta_j}}\right)\nonumber\\
    &=\sum_{q\leq X^{\beta_j}}\frac{2h^2(q)}{q}+O\left(1\right).
\end{align}
Using the same method, we can obtain
\begin{equation}\label{h2}
    \sum_{q\leq X^{\beta_j/2}} \frac{h(q^2)}{q^{1+2\max(\sigma-1/2,1/\log X^{\beta_j})}}\leq \sum_{q\leq X^{\beta_j/2}}\frac{h(q^2)}{q}+O\left(1\right).
\end{equation}
Inserting \eqref{h1} and \eqref{h2} into \eqref{Sj4} yields that
\begin{align}\label{Sj5}
&\sum_{\substack{(p,2)=1 \\p \in S(j) }} \log p\big| L\big(\sigma+it_1,\chi^{(8p)} \big) \big|^{a_1} \cdots \big| L\big(\sigma+it_{k},\chi^{(8p)}  \big) \big|^{a_{k}} \Phi \Big( \frac {p}{X} \Big) \nonumber\\
\ll & X \exp \Big (-\frac {(Q+1)a}{\beta_j} + \sum_{q \leq X}\frac {2h^2(q)}{q}+\sum_{q\leq X} \frac{h(q^2)}{q} \Big ).
\end{align}
   Through basic calculations, we get
\begin{align}\label{hexp}
& \sum_{q \leq X }\frac {(2h(q))^2}{2q}+\sum_{q\leq X} \frac{h(q^2)}{q} = \sum_{q \leq X }\frac {1}{2q}\Big ( \Big( \sum^{k}_{m=1}a_m\cos(t_m\log q) \Big)^2+\sum^{k}_{m=1}a_m\cos(2t_m\log q)\Big ) \nonumber\\
=& \sum_{q \leq X }\frac {1}{2q}\Big (\sum^{k}_{m=1}a^2_m\cos^2(t_m\log p)+2\sum_{1 \leq m<\ell \leq k}a_ma_{\ell}\cos(t_m\log q)\cos(t_{\ell}\log q)+\sum^{k}_{m=1}a_m\cos(2t_m\log q)\Big )\nonumber \\
=& \frac{1}{4}\sum^{k}_{m=1}a^2_m\sum_{q \leq X }\frac {1}{q}+\frac{1}{2}\sum_{1 \leq m<\ell \leq k}a_ma_{\ell}\sum_{q\leq X}\Big(\frac{\cos((t_m+t_{\ell})\log q)}{q}+\frac{\cos((t_m-t_{\ell})\log q)}{q}\Big )\nonumber\\
&+\sum^{k}_{m=1} \Big( \frac {a^2_m}{4}+\frac{a_{m}}{2} \Big) \sum_{q\leq X}\frac{\cos(2t_{m}\log q)}{q}.
\end{align}
Applying Lemma \ref{cos} to calculate (\ref{hexp}), we obtain
\begin{align}\label{hexp2}
     &\sum_{q \leq X }\frac {(2h(q))^2}{2q}+\sum_{q\leq X} \frac{h(q^2)}{q}\nonumber\\
     &\ll \frac{1}{4}\Big(\sum^{k}_{m=1}a^2_m\Big)\log\log X+\frac{1}{2}\sum_{1 \leq m<\ell \leq k}a_ma_{\ell}\log (|\zeta(1+1/\log X+i(t_m+t_{\ell}))||\zeta(1+1/\log X+i(t_m-t_{\ell}))|)\nonumber\\
    & \quad +\sum^{k}_{m=1} \Big( \frac {a^2_m}{4}+\frac{a_{m}}{2} \Big) \log|\zeta(1+1/\log X+i2t_{m})|.
\end{align}
Inserting (\ref{hexp2}) into (\ref{Sj5}), we have
\begin{align}\label{Sj6}
&\sum_{\substack{(p,2)=1 \\p \in S(j) }} \log p\big| L\big(\sigma+it_1,\chi^{(8p)} \big) \big|^{a_1} \cdots \big| L\big(\sigma+it_{k},\chi^{(8p)}  \big) \big|^{a_{k}} \Phi \Big( \frac {p}{X} \Big) \nonumber\\
\ll &  \exp \Big (-\frac {(Q+1)a}{\beta_j}\Big)X (\log X)^{(a_1^2+\cdots +a_{k}^2)/4}  \prod_{1\leq m<\ell \leq k} \big|\zeta(1+i(t_m-t_{\ell})+\frac 1{\log X}) \big|^{a_ma_{\ell}/2}\nonumber\\
   &\times \big|\zeta(1+i(t_m+t_{\ell})+\frac 1{\log X}) \big|^{a_ma_{\ell}/2}\prod_{1\leq m\leq k} \big|\zeta(1+2it_{m}+\frac 1{\log X}) \big|^{a^2_m/4+a_{m}/2}.
\end{align}
Since
\begin{equation*}
    \sum_{j=1}^{\mathcal{J}}e^{-\frac {(Q+1)a}{\beta_j}}\ll 1,
\end{equation*}
we derive
\begin{align*}
&\sum_{j=1}^{\mathcal{J}}  \sum_{\substack{(p,2)=1 \\p \in \mathcal{S}(j) }}\log p\big| L\big(\sigma+it_1,\chi^{(8p)} \big) \big|^{a_1} \cdots \big| L\big(\sigma+it_{k},\chi^{(8p)}  \big) \big|^{a_{k}}  \Phi \Big( \frac p{X} \Big) \nonumber\\
   \ll & X (\log X)^{(a_1^2+\cdots +a_{k}^2)/4}  \prod_{1\leq m<\ell \leq k} \big|\zeta(1+i(t_m-t_{\ell})+\frac 1{\log X}) \big|^{a_ma_{\ell}/2}\nonumber\\
   &\times \big|\zeta(1+i(t_m+t_{\ell})+\frac 1{\log X}) \big|^{a_ma_{\ell}/2}\prod_{1\leq m\leq k} \big|\zeta(1+2it_{m}+\frac 1{\log X}) \big|^{a^2_m/4+a_{m}/2},
\end{align*}
which completes the proof of Theorem \ref{t1}.

\section{Proof of Theorem 1.2}

By Lemma \ref{Y>X}, we can see that
\begin{align*}
S_m(X,Y) \ll \log X\sum_{\substack{d \leq X \\ (d,2)=1}}\Big | \sum_{n \leq Y}\chi^{(8d)}(n)\Big |^{2m}\ll X^{m+1}\log X,
\end{align*}
for any $m>0$.
Therefore, we derive the result of Theorem \ref{t2} with $Y\geq X$.
In the remaining part of the proof, we assume that $Y \leq X$.

Define a non-negative smooth function
\begin{equation}\label{PhiU}
\Phi_U(t)\begin{cases}
            \leq 1,\quad  x\in [0,1/U]\cup [1-1/U,1],\\
            =1,\quad  x\in[1/U,1-1/U],\\
            =0,\quad \text{otherwise },
            \end{cases}
\end{equation}
satisfying $\Phi^{(j)}_U(t) \ll_j U^j$ for all integers $j \geq 0$, where $U$ is a parameter, we will clarify it later. By Mellin transform and we repeat integration by parts, then for any integer $A \geq 1$ and $\Re(s) \geq 1/2$, we have
\begin{align}
\label{whatbound}
 \widehat{\Phi}_U(s)=\int_0^\infty \Phi_U(t)t^{s-1}dt  \ll  U^{A-1}(1+|s|)^{-A}.
\end{align}
Using the smooth function $\Phi_U(\frac nY)$, we get the upper bound of $S_m(X,Y)$:
\begin{align}\label{SmXY}
S_m(X,Y) \ll S_m^{(1)}(X,Y)+S_m^{(2)}(X,Y),
\end{align} 	
where
\begin{equation*}
   S_m^{(1)}(X,Y):= \sum_{2<p \leq X }\log p\Big | \sum_{n}\chi^{(8p)}(n)\Phi_U \Big( \frac {n}{Y} \Big)\Big |^{2m}
\end{equation*}
and
\begin{equation*}
    S_m^{(2)}(X,Y):=\sum_{2<p \leq X }\log p\bigg|\sum_{n\leq Y} \chi^{(8p)}(n)\Big(1-\Phi_U \Big( \frac {n}{Y} \Big) \Big)\bigg|^{2m}.
\end{equation*}

\begin{proposition}\label{Sm1}
With the notation as above and assume the truth of GRH. We have for any integer $k \geq 1$ and any real numbers $2m \geq k+1, \varepsilon>0$
    \begin{align*}
    S_m^{(1)}(X,Y)\ll XY^m(\log X)^{ R(m,k,\varepsilon)}.
    \end{align*}
\end{proposition}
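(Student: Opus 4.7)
The plan is to express the smoothed character sum as a contour integral via Mellin inversion, raise to the $2m$-th power, apply Corollary \ref{cor1} to average over primes, and carefully bound the resulting multi-dimensional integral. Starting from Mellin inversion, for any $c>1$,
\begin{equation*}
\sum_n \chi^{(8p)}(n)\,\Phi_U\!\Big(\frac{n}{Y}\Big) = \frac{1}{2\pi i}\int_{(c)} L(s,\chi^{(8p)})\,\widehat{\Phi}_U(s)\,Y^s\,ds,
\end{equation*}
and invoking GRH together with the decay in \eqref{whatbound}, I shift the contour to $\Re(s)=1/2$. The rapid decay of $\widehat{\Phi}_U$ on horizontal lines effectively truncates the resulting integral to $|t|\leq X^{O(1)}$, within the range in which Corollary \ref{cor1} applies.

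Next, I raise to the $2m$-th power by writing $|\int f|^{2m}$ as a $2m$-fold integral of $\prod_{j=1}^{m}f(t_j)\overline{f(t_{m+j})}$ (with $f(t)=L(1/2+it,\chi^{(8p)})\widehat{\Phi}_U(1/2+it)Y^{it}$), and pull absolute values inside. Swapping the prime-sum with the integral and applying Corollary \ref{cor1} with $a_1=\cdots=a_{2m}=1$ gives
\begin{equation*}
S_m^{(1)}(X,Y) \ll XY^{m}(\log X)^{m/2}\int_{\mathbb R^{2m}}\prod_{j=1}^{2m}\big|\widehat{\Phi}_U(\tfrac{1}{2}+it_j)\big|\cdot G(\mathbf t)\,d\mathbf t,
\end{equation*}
where
\begin{equation*}
G(\mathbf t)=\prod_{1\leq j<\ell\leq 2m}g(|t_j-t_\ell|)^{1/2}\,g(|t_j+t_\ell|)^{1/2}\prod_{j=1}^{2m}g(|2t_j|)^{3/4}.
\end{equation*}

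The heart of the argument is bounding this $2m$-fold integral. I would decompose $\mathbb R^{2m}$ dyadically according to the magnitudes $|t_j|$ and the spacings $|t_j\pm t_\ell|$, and control each piece using the three-range definition \eqref{gDef} of $g(\alpha)$ together with the trivial volume estimate balanced against the decay of $\widehat{\Phi}_U$. The parameter $k$ in the statement governs this decomposition: it marks a configuration in which $k$ of the shifts sit in a ``concentrated'' cluster while the remaining $2m-k$ behave essentially independently. Each of the three expressions inside the maximum defining $R(m,k,\varepsilon)$ corresponds to a natural dominant regime — the fully clustered regime (giving $2m^2-m+1$), the regime in which $k$ shifts form a cluster separated from the other $2m-k$ (giving the second term), and the regime in which the self-interaction factors $g(|2t_j|)^{3/4}$ dominate (giving the third term) — and the stated bound is the maximum of these. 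The main obstacle will be executing this combinatorial integral estimate cleanly: tracking how the pairwise and self-factors of $G$ interact across the dyadic decomposition, summing the geometric series in the small- and large-shift ranges, and verifying that the three regimes above indeed account for all significant contributions, so that choosing the smoothing parameter $U$ as a suitable power of $\log X$ yields the stated $R(m,k,\varepsilon)$ exponent.
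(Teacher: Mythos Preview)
Your Mellin-inversion opening matches the paper, but the core of your argument has a genuine gap.

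\textbf{Non-integer $2m$.} You expand $|\int f|^{2m}$ as a literal $2m$-fold integral, which requires $2m\in\mathbb{Z}$. The proposition is stated for arbitrary real $m$ with $2m\geq k+1$ (and indeed the paper's headline application is $m\geq 2$ real). The paper avoids this by never expanding to $2m$ copies: it uses H\"older's inequality to pull out the first $k$ integrals and to replace the remaining $(2m-k)$-th power of an integral by $|\mathcal B_h|^{2m-k-1}\int |L|^{2m-k}\,du$. This works for real $2m-k\geq 1$, which is exactly the hypothesis $2m\geq k+1$.

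\textbf{The role of $k$.} In your sketch $k$ is described as a clustering scale in the integration domain, but that is not what it is. In the paper $k$ is the number of integrals separated out \emph{before} invoking Corollary~\ref{cor1}; the corollary is then applied with the $(k+1)$-tuple of exponents $(a_1,\ldots,a_{k+1})=(1,\ldots,1,2m-k)$, giving the base factor $(\log X)^{(k+(2m-k)^2)/4}$ and the specific $g$-factor pattern (cross-factors between the $t_i$'s with exponent $1/2$, cross-factors between each $t_i$ and $u$ with exponent $(2m-k)/2$, and a self-factor for $u$ with exponent $(2m-k)^2/4+(2m-k)/2$). The three branches of $R(m,k,\varepsilon)$ then arise from three concrete cases in the dyadic analysis of the $(k+1)$-fold integral (whether $h_0<H$, or $h_0=H$ with $|u|>5$ or $|u|\leq 5$), not from configurations of $2m$ equal-weight variables. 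With your choice $a_1=\cdots=a_{2m}=1$ the exponent of $\log X$ coming from Corollary~\ref{cor1} is $m/2$, and there is no mechanism by which the parameter $k$ enters the final bound at all---so there is no way your outline can produce the stated $R(m,k,\varepsilon)$.

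A minor point: $U$ is taken to be a small power of $X$ (namely $U=X^{3\varepsilon}$), not a power of $\log X$; this is what makes the truncation of the vertical integral to $|t|\leq X^{O(\varepsilon)}$ work via \eqref{whatbound}.
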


\begin{proposition}\label{Sm2}
    With the notation as above and assume the truth of GRH. We have for $m \geq 1$,
\begin{equation*}
S_m^{(2)}(X,Y) \ll XY^m.
\end{equation*}
\end{proposition}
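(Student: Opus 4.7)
The plan is to exploit the short support of $1 - \Phi_U$ so that the inner character sum can be bounded trivially, without invoking any $L$-function input. By the definition of $\Phi_U$ in \eqref{PhiU}, the function $1 - \Phi_U(n/Y)$ vanishes on the middle block $n \in [Y/U,\, Y(1-1/U)]$ and is bounded by $1$ elsewhere. Restricted to $n \leq Y$, its support therefore lies in the two short intervals $[0,\, Y/U] \cup [Y(1-1/U),\, Y]$, which together contain $O(Y/U)$ integers.

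Applying $|\chi^{(8p)}(n)(1-\Phi_U(n/Y))| \leq 1$ in the triangle inequality then gives, uniformly in odd primes $p$,
\[
\bigg|\sum_{n \leq Y} \chi^{(8p)}(n)\Big(1 - \Phi_U\Big(\frac{n}{Y}\Big)\Big)\bigg| \ll \frac{Y}{U}.
\]
Raising to the $2m$-th power and summing over primes with weight $\log p$, together with the Chebyshev bound $\sum_{p \leq X} \log p \ll X$, yields
\[
S_m^{(2)}(X, Y) \ll X \Big(\frac{Y}{U}\Big)^{2m}.
\]
The asserted bound $S_m^{(2)}(X, Y) \ll XY^m$ then follows as soon as $U$ is chosen at least of size $Y^{1/2}$, since then $(Y/U)^{2m} \leq Y^m$.

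There is essentially no internal obstacle: the cutoff $1 - \Phi_U$ is already a very short sum, and no $L$-function information is needed for this piece. The only subtlety lies externally, in choosing $U$ consistently with the argument for Proposition \ref{Sm1}, where the bound $\widehat{\Phi}_U(s) \ll U^{A-1}(1+|s|)^{-A}$ enters the Mellin contour shift and a too-large $U$ may inflate the tail integrals on the critical line. Should a strictly smaller $U$ be required there, the trivial bound above can be replaced by coupling $|\sum_n a_n \chi^{(8p)}(n)|^{2m-2} \leq (Y/U)^{2m-2}$ (with $a_n=1-\Phi_U(n/Y)$) to Heath-Brown's quadratic large sieve (Lemma \ref{Heath}) applied to the remaining square; in that case the diagonal estimate $\sum_{mn=\square,\,m,n \in \mathrm{supp}(a)} 1 \ll (Y/U)\log Y$ (obtained via the parametrization $m = bu^2,\, n = bv^2$ with $b$ squarefree) yields $S_m^{(2)}(X, Y) \ll X(Y/U)^{2m-1}(XY)^{\varepsilon}$, which already suffices whenever $U \gg Y^{(m-1)/(2m-1) + \varepsilon}$.
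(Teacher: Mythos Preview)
Your argument has a genuine gap: both the main route and the fallback require a value of $U$ that is incompatible with the one fixed in the paper. In the proof of Proposition~\ref{Sm1} the parameter is pinned down as $U=X^{3\varepsilon}$ (see \eqref{UV}); this is what makes the Mellin tail negligible there. Your trivial bound needs $U\geq Y^{1/2}$, and your Heath--Brown fallback needs $U\gg Y^{(m-1)/(2m-1)+\varepsilon}$, both of which are genuine powers of $Y$ (hence potentially powers of $X$) whenever $m>1$. Since the two propositions must hold for the \emph{same} $U$ to recover Theorem~\ref{t2}, you have not actually proved Proposition~\ref{Sm2} in the setting of the paper; you would have to go back and redo Proposition~\ref{Sm1} with your larger $U$, which you do not attempt.

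The paper works with $U=X^{3\varepsilon}$ throughout and gets around the obstacle as follows. By Cauchy--Schwarz,
\[
S_m^{(2)}(X,Y)\leq \Big(\sum_{p}\log p\,\Big|\sum_n a_n\chi^{(8p)}(n)\Big|^{2}\Big)^{1/2}\Big(\sum_{p}\log p\,\Big|\sum_n a_n\chi^{(8p)}(n)\Big|^{4m-2}\Big)^{1/2},
\]
with $a_n=1-\Phi_U(n/Y)$. The second-moment factor is the one you want: Heath--Brown (Lemma~\ref{Heath}) plus the short support of $a_n$ gives a genuine \emph{power} saving, $\ll X^{1-2\varepsilon}Y$, precisely because $U=X^{3\varepsilon}$. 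For the $(4m-2)$-th moment the paper does \emph{not} bound the extra copies trivially by $Y/U$ (which is what loses you the game); instead it unfolds $a_n$ back into the full sum minus the smoothed sum, bounds the smoothed piece by Proposition~\ref{Sm1} with exponent $2m-1$, and handles the unsmoothed piece via Perron's formula and Lemma~\ref{fdiff}. This yields $\ll XY^{2m-1}(\log X)^{O_{m,\varepsilon}(1)}$. Multiplying the two square roots, the $X^{-\varepsilon}$ saved in the second moment kills all logarithms, giving $S_m^{(2)}(X,Y)\ll XY^m$. The idea you are missing is to feed Proposition~\ref{Sm1} back into the high-moment factor rather than spending those copies on the pointwise bound $Y/U$.
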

By (\ref{SmXY}), Proposition \ref{Sm1} and Proposition \ref{Sm2},  we complete the proof of Theorem \ref{t2}. In next section, we will give the proofs of Proposition \ref{Sm1} and Proposition \ref{Sm2}.

\section{Proofs of Propositions }

\subsection{Proof of Proposition \ref{Sm1}}
To deal with $S_m^{(1)}(X,Y)$, we apply Mellin inversion. Thanks to (\ref{whatbound}) and Lemma \ref{L(s,chi)}, we can shift the line of integration from $\Re(s)=2$ to $\Re(s)=1/2$,
\begin{align}\label{SmXY1}
 &S_m^{(1)}(X,Y)
 =\sum_{2<p\leq X }\log p\Big | \int\limits_{(2)}L(s, \chi^{(8p)})Y^s\widehat{\Phi}_U(s)  ds\Big |^{2m}
= \sum_{2<p\leq X }\log p\Big | \int\limits_{(1/2)}L(s, \chi^{(8p)})Y^s\widehat{\Phi}_U(s)  ds\Big |^{2m}\nonumber\\
\ll&\sum_{2<p \leq X }\log p\Big | \int\limits_{\substack{ (1/2) \\ |\Im(s)| \leq X^V}}L(s, \chi^{(8p)})Y^s\widehat{\Phi}_U(s)  ds\Big |^{2m}+
\sum_{2<p\leq X }\log p\Big | \int\limits_{\substack{ (1/2) \\ |\Im(s)| > X^V}}L(s, \chi^{(8p)})Y^s\widehat{\Phi}_U(s)  ds\Big |^{2m}.
\end{align}
Here the last step, we separate the integral into two parts, for some $V>0$ which will be specified later.  By H\"older's inequality, we have
\begin{align}
\label{SmXY11}
  &\sum_{2<p \leq X} \log p \Big | \int\limits_{\substack{ (1/2) \\ |\Im(s)| > X^V}}L(s, \chi^{(8p)})Y^s\widehat{\Phi}_U(s)  ds\Big |^{2m}\nonumber\\
  \ll & Y^m \sum_{2<p \leq X} \log p \Big(\int\limits_{\substack{ (1/2) \\ |\Im(s)| > X^V}}| L(s, \chi^{(8p)})|\Big | \widehat{\Phi}_U(s) \Big| ^{\frac{1}{2m}}\Big | \widehat{\Phi}_U(s) \Big| ^{\frac{2m-1}{2m}}|ds|\Big)^{2m}\nonumber\\
 \ll & Y^m\Big (\int\limits_{\substack{ (1/2) \\ |\Im(s)| > X^V}}\Big | \widehat{\Phi}_U(s) \Big| |ds| \Big )^{2m-1}
 \int\limits_{\substack{ (1/2) \\ |\Im(s)| > X^V}}\sum_{2<p\leq X }\log p\Big |L(s, \chi^{(8p)})\Big|^{2m} \Big| \widehat{\Phi}_U(s)\Big | |ds|
\end{align} 	
for $m \geq 1/2$.
By \eqref{whatbound}, we obtain
\begin{align}\label{phiint}
  \int\limits_{\substack {(1/2) \\ |\Im(s)| > X^V}}\Big | \widehat{\Phi}_U(s) \Big| | ds|
 \ll & \int\limits_{\substack {(1/2) \\ |\Im(s)| > X^V}}\frac U{1+|s|^2} |ds| \ll_V \frac {U}{X^V}.
\end{align}
Applying Lemma \ref{L(s,chi)} and (\ref{whatbound}) to estimate the inner integral of (\ref{SmXY11}), we get
\begin{align}
\label{>V}
  & \int\limits_{\substack {(1/2) \\ |\Im(s)| > X^V}}\sum_{2<p\leq X }\log p\Big |L(s, \chi^{(8p)})\Big|^{2m}\cdot \Big |\widehat{\Phi}_U(s)\Big | |ds| \nonumber\\
  \ll & \int\limits_{\substack {(1/2) \\ |\Im(s)| > X^V}}\sum_{2<p\leq X }|ps|^{\varepsilon}\cdot\frac U{1+|s|^2}  | ds| \ll XUX^{-V(1-\varepsilon)+\varepsilon}.
\end{align}
   We choose
   \begin{equation}\label{UV}
   U=X^{3\varepsilon}, \quad V=6\varepsilon
   \end{equation}
   to ensure the term (\ref{SmXY11}) $\ll XY^m$ with $m\geq 1/2$. Therefore, we get
\begin{align*}
S^{(1)}_m(X,Y)
\ll &
   \sum_{2<p \leq X }\log p \Big | \int\limits_{\substack{ (1/2) \\ |\Im(s)| \leq X^{\varepsilon}}}L(s, \chi^{(8p)})Y^s\widehat{\Phi}_U(s)  ds\Big |^{2m}
   +O(XY^m)\nonumber \\
   \ll & Y^m \sum_{2<p \leq X }\log p
   \Big | \int\limits_{\substack{ |t| \leq X^{\varepsilon}}}\Big |L( \tfrac{1}{2}+it, \chi^{(8p)})\Big |\frac 1{1+|t|}  dt\Big |^{2m}
   +O(XY^m).
\end{align*}

Proposition \ref{Sm1} follows from the following Lemma.
\begin{lemma}\label{fdiff}
With the notation as above and assume the truth of GRH. We have for any integer $k \geq 1$ and any real numbers $2m \geq k+1, \varepsilon>0$,
\begin{equation*}
\sum_{2<p \leq X }\log p
   \Big | \int\limits_{\substack{ |t| \leq X^{\varepsilon}}}\Big |L(1/2+it, \chi^{(8p)})\Big |\frac 1{1+|t|}dt\Big |^{2m} \ll X(\log X)^{ R(m,k,\varepsilon)},
\end{equation*}
where $R(m,k,\varepsilon)$ is defined in \eqref{Edef}.
\end{lemma}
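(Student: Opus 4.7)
Define $I_p := \int_{|t|\leq X^\varepsilon} \bigl|L(1/2+it, \chi^{(8p)})\bigr|\, dt/(1+|t|)$, so that the left-hand side equals $\sum_p (\log p)\, I_p^{2m}$. The strategy is to combine H\"older's inequality with Corollary \ref{cor1} applied to $k+1$ shift parameters, and then to estimate a $(k+1)$-dimensional integral of $g$-correlation factors.

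We first split $I_p^{2m} = I_p^{2m-k}\cdot I_p^k$. Applying Jensen's inequality (equivalently, H\"older's inequality) to the first factor with respect to the probability measure $dt/((1+|t|)\,M)$ on $\{|t|\leq X^\varepsilon\}$, where $M = \int dt/(1+|t|) \ll \log X$, we obtain
\begin{equation*}
I_p^{2m-k}\ \ll\ (\log X)^{2m-k-1} \int_{|t|\leq X^\varepsilon} \bigl|L(1/2+it, \chi^{(8p)})\bigr|^{2m-k}\ \frac{dt}{1+|t|}.
\end{equation*}
The factor $I_p^k$ is expanded as an iterated integral of $\prod_{j=1}^k \bigl|L(1/2+is_j, \chi^{(8p)})\bigr|$. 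Interchanging the sum over $p$ with these $k+1$ integrals, the quantity to be bounded reduces to
\begin{equation*}
(\log X)^{2m-k-1} \int\!\cdots\!\int \frac{dt\,ds_1\cdots ds_k}{(1+|t|)\prod_j (1+|s_j|)} \sum_{2<p\leq X}(\log p)\, \bigl|L(1/2+it, \chi^{(8p)})\bigr|^{2m-k} \prod_{j=1}^k \bigl|L(1/2+is_j, \chi^{(8p)})\bigr|,
\end{equation*}
which is precisely a form treated by Corollary \ref{cor1} with $k+1$ shift parameters and weights $(2m-k, 1, \ldots, 1)$. This application produces a main factor $X(\log X)^{((2m-k)^2 + k)/4}$ multiplied by a product of $g$-factors.

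It then remains to estimate the $(k+1)$-dimensional integral whose integrand is
\begin{equation*}
\frac{g(|2t|)^{(2m-k)^2/4 + (2m-k)/2}}{1+|t|} \prod_{j=1}^k \frac{g(|2s_j|)^{3/4}}{1+|s_j|} \prod_{j=1}^k \bigl(g(|t-s_j|)\,g(|t+s_j|)\bigr)^{(2m-k)/2} \prod_{1\leq j<j'\leq k}\bigl(g(|s_j-s_{j'}|)\,g(|s_j+s_{j'}|)\bigr)^{1/2}.
\end{equation*}
This integral is bounded by partitioning the domain according to the piecewise definition of $g$ in \eqref{gDef}, which requires a careful case analysis based on the relative sizes of $|t|$, the $|s_j|$, and their pairwise sums and differences. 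The main obstacle is the combinatorial complexity of this multi-dimensional integral: different configurations of clustering among the shifts lead to different dominant contributions, and the three terms in the maximum defining $R(m,k,\varepsilon)$ correspond to three distinct extreme regimes (roughly: all shifts well-separated; the $k$ smaller shifts clustering together near the origin; and a mixed regime in which certain sums or differences become small simultaneously), which together dominate all possible configurations. Collecting the resulting power of $\log X$ and combining with the main factor $X(\log X)^{((2m-k)^2+k)/4}$ and the H\"older factor $(\log X)^{2m-k-1}$ yields the desired bound $X(\log X)^{R(m,k,\varepsilon)}$.
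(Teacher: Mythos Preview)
Your plan is on the right conceptual track---reduce to a moment with $k+1$ shifts via H\"older, apply Corollary~\ref{cor1}, then integrate the resulting $g$-factors---but the global Jensen step you take is too lossy and cannot recover the claimed exponent $R(m,k,\varepsilon)$. Concretely: for $m=2$, $k=2$ (the case singled out in \eqref{mainestime2}) your method yields
\[
\sum_{2<p\le X}(\log p)\,I_p^{4}\ \ll\ (\log X)^{2m-k-1}\,X(\log X)^{((2m-k)^2+k)/4}\cdot J
\ =\ X(\log X)^{5/2}\cdot J,
\]
where $J$ is the three-variable $g$-integral you write down. But on the cube $|t|,|s_1|,|s_2|\le(10\log X)^{-1}$ every argument of $g$ is below $1/\log X$, so each $g$-factor equals $\log X$; the integrand there is $(\log X)^{2+3/2+2+2+1}=(\log X)^{17/2}$ and the volume is $\asymp(\log X)^{-3}$, giving $J\gg(\log X)^{11/2}$. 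Hence your upper bound is at least $X(\log X)^{8}$, whereas $R(2,2,\varepsilon)=7+\varepsilon$. The loss is intrinsic to your Jensen step: it always pays $(\log X)^{2m-k-1}$, regardless of where the mass of $I_p$ sits.

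The paper avoids this by a finer localisation before applying H\"older. First the outer integral is dyadically decomposed (equation~\eqref{lemma51}) so that one works with $T_m(E,X)=\sum_p(\log p)\big(\int_0^E|L|\big)^{2m}$ with Lebesgue measure. Then $k$ copies are pulled out \emph{with the symmetry restriction} $u\in\mathcal D=\{|t_1-u|\le\cdots\le|t_k-u|\}$, and the remaining $u$-integral is dyadically decomposed by the distance $|u-t_1|$ via the sets $\mathcal B_h$. Only at this point is H\"older applied, producing the factor $|\mathcal B_{h_0}|^{2m-k-1}$ rather than a global $(\log X)^{2m-k-1}$; when $h_0$ is small (the clustered regime that broke your bound) this factor is tiny and exactly compensates the large $g$-values. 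The further decomposition in the $h_1,\dots,h_{k-1}$ variables is what lets one bound $g(|t_i\pm u|)$ and $g(|t_j\pm t_i|)$ uniformly in terms of $\max(h_0,\dots,h_{i-1})$ and $\max(h_i,\dots,h_{j-1})$; the three terms in $R$ then arise from the cases $h_0<H$, and $h_0=H$ with $|u|>5$ or $|u|\le 5$. Beyond the quantitative gap above, your proposal also leaves the multidimensional integral entirely uncomputed; that computation is the bulk of the argument, and without a localisation scheme analogous to the $\mathcal B_h$ decomposition it is not clear it can be done to the required precision.
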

\begin{proof}
By symmetry and H\"older's inequality for $a=1-1/(2m)+\varepsilon$ with $\varepsilon>0$, we get
\begin{align}\label{lemma51}
 &\sum_{2<p \leq X }\log p\Big | \int_{ |t| \leq X^{\varepsilon}}   \Big |L(\tfrac{1}{2}+it, \chi^{(8p)})|\frac {d t}{|t|+1} \Big |^{2m} \nonumber\\
 \ll & \sum_{2<p \leq X }\log p\Big |\int_0^{X^{\varepsilon}} \frac{|L(\tfrac{1}{2}+it,\chi^{(8p)})|}{t+1} d t\Big |^{2m} \nonumber\\
 \ll & \sum_{2<p \leq X }\log p\Big |\sum_{n\leq \log X+1}\int_{e^{n-1}-1}^{e^n-1} \frac{|L(\tfrac{1}{2}+it,\chi^{(8p)})|}{t+1} d t\Big |^{2m}\nonumber\\
  \leq & \sum_{2<p \leq X }\log p\bigg(\sum_{n\leq \log X+1} n^{-2am/(2m-1)} \bigg)^{2m-1}
    \sum_{n\leq  \log X+1} \bigg(n^a\int_{e^{n-1}-1}^{e^{n}-1 } \frac{|L(\tfrac{1}{2}+it,\chi^{(8p)}) |}{t+1} d t\bigg)^{2m}   \nonumber\\
   \ll & \sum_{n\leq  \log X+1} \frac{n^{2m-1+\varepsilon} }{e^{2nm} } \sum_{2<p \leq X }\log p\bigg( \int_{e^{n-1}-1}^{e^{n}-1 } |L(\tfrac{1}{2}+it,\chi^{(8p)}) | d t \bigg)^{2m}.
\end{align}
Below we will estimate the summation
\begin{equation*}
    T_m(E,X):=\sum_{2<p \leq X }\log p\bigg( \int_{0}^{E} |L(\tfrac{1}{2}+it,\chi^{(8p)}) | d t \bigg)^{2m},
\end{equation*}
where $10 \leq E=X^{O(1)}$. The method of getting the upper bound of $T_m(E,X)$ is similar to the proof of (6.1) of Szab\'o \cite{Szab}, so we will not explain it in much detail.

First, we put out $k$ integrals, which we can do because $2m\geq k+1$ by assumption. Using the notation $d {\mathbf t} =dt_1\cdots dt_k$, we obtain
\begin{align}\label{Lintdecomp}
   T_m(E,X)
      \ll \sum_{2<p \leq X }\log p\int_{[0,E]^k}\prod_{a=1}^k|L(1/2+ it_a, \chi^{(8p)})| \bigg(\int_{\mathcal{D} }|L(1/2+iu, \chi^{(8p)})| d u \bigg)^{2m-k} d\mathbf{t},
\end{align}
where $\mathcal{D}=\mathcal{D}(t_1,\ldots,t_k)=\{ u\in [0,E]:|t_1-u|\leq |t_2-u|\leq \cdots \leq |t_k-u| \}$. Here, we restrict the integration over $\mathcal{D}$ by symmetry.

We set
\begin{align*}
    &\mathcal{B}_1=\big[-\frac{1}{\log X},\frac{1}{\log X}\big],\quad \quad \quad \mathcal{B}_H=[-E,E]\setminus \bigcup_{1\leq h<H} \mathcal{B}_h \\
    &\mathcal{B}_h=\big[-\frac{e^{h-1}}{\log X}, -\frac{e^{h-2}}{\log X}\big]
  \cup \big[\frac{e^{h-2}}{\log X}, \frac{e^{h-1}}{\log X}\big], \quad \text{for}\quad 2\leq h< \lfloor \log \log X\rfloor+10 =: H.
\end{align*}
For any $t_1\in [0,E]$,  we have
\begin{equation*}
    \mathcal{D}\subset [0,E] \subset t_1+[-E,E]\subset \bigcup_{1\leq h\leq H} t_1+\mathcal{B}_h.
\end{equation*}
Thus, if we denote $\mathcal{A}_h=\mathcal{B}_h\cap (-t_1+\mathcal{D})$, then $(t_1+\mathcal{A}_h)_{1\leq h\leq H}$ form a partition of $\mathcal{D}$. Observe that $|\mathcal{A}_h|\leq |\mathcal{B}_h|$.
Applying H?lder's inequality twice we get
\begin{align}\label{LintoverD}
    & \bigg(\int_{\mathcal{D}}|L(1/2 + iu, \chi^{(8p)})| d u\bigg)^{2m-k}  \leq \bigg( \sum_{1\leq h\leq H} \frac{1}{h}\cdot  h \int_{t_1+\mathcal{A}_h} |L( 1/2+iu, \chi^{(8p)})| du  \bigg)^{2m-k} \nonumber\\
     \leq & \bigg(\sum_{1\leq h\leq H} h^{2m-k} \bigg( \int_{t_1+\mathcal{A}_h} \big|L( 1/2+iu, \chi^{(8p)})\big| d u  \bigg)^{2m-k}\bigg)
     \bigg(\sum_{1\leq h\leq K } h^{-(2m-k)/(2m-k-1)} \bigg)^{2m-k-1} \nonumber\\
     \ll & \sum_{1\leq h\leq H} h^{2m-k} \bigg( \int_{t_1+\mathcal{A}_h} |L( 1/2+iu, \chi^{(8p)})| d u \bigg)^{2m-k} \nonumber\\
     \leq & \sum_{1\leq h\leq H} h^{2m-k} |\mathcal{B}_h|^{2m-k-1} \int_{t_1+\mathcal{A}_h} |L(1/2+iu, \chi^{(8p)})|^{2m-k} d u.
\end{align}
For $\mathbf{t}=(t_1,\ldots,t_k)$, we write
\begin{equation}\label{Lnew}
    L(\mathbf{t},u)=\sum_{\substack{p \leq X \\ (p,2)=1}}\log p\prod_{a=1}^k|L( 1/2+it_a, \chi^{(8p)})| \cdot |L( 1/2+iu, \chi^{(8p)})|^{2m-k}.
\end{equation}
 From \eqref{Lintdecomp}--\eqref{Lnew}, we obtain
\begin{align}\label{Lintest}
    T_m(E,X)\ll&
    \sum_{1\leq h\leq H} h^{2m-k} |\mathcal{B}_{h}|^{2m-k-1} \int_{[0,E]^k}\int_{t_1+\mathcal{A}_{h}} L(\mathbf{t},u) \,du\, d \mathbf{t}  \nonumber\\
     \ll & \sum_{1\leq h_0, h_1, \ldots h_{k-1}\leq H} h_0^{2m-k} |\mathcal{B}_{h_0}|^{2m-k-1} \int_{\mathcal{C}_{h_0,h_1, \cdots, h_{k-1}}} L(\mathbf{t},u) \,d u \,d \mathbf{t},
\end{align}
where
$$\mathcal{C}_{h_0,h_1, \cdots, h_{k-1}}=\{(t_1,\ldots,t_k,u)\in [0,E]^{k+1}: u\in t_1+ \mathcal{A}_{h_0},\, |t_{i+1}-u|-|t_i-u|\in \mathcal{B}_{h_i}, \ 1 \leq i \leq k-1\}.$$
Using Corollary \ref{cor1} to deal with $L(\mathbf{t},u)$, for $(t_1,\ldots,t_k,u)\in \mathcal{C}_{h_0,h_1, \cdots, h_{k-1}}$, we have
\begin{align}\label{L(t,u)}
&L(\mathbf{t},u)\nonumber\\
\ll& X(\log X)^{\frac{k+(2m-k)^2}{4}}\prod_{1\leq i<j\leq k} g(|t_i-t_{j}|)^{a_ia_{j}/2}g(|t_i+t_{j}|)^{a_ia_{j}/2}\prod_{1\leq i\leq k} g(|2t_i|)^{a^2_i/4+a_i/2}.
\end{align}
Below, we provide the upper bound of $g(\alpha)$ for two cases, with $\alpha$ in different ranges,  and deduce the corresponding upper bound of (\ref{Lintest}).

\textbf{Case 1:} $h_0<H$.

By the definition of $\mathcal{C}_{h_0,h_1, \cdots, h_{k-1}}$ and the observation that $t_i, u \geq 0, 1\leq i \leq k$, we have
\begin{equation*}
     \frac{e^{h_0}}{\log X}\ll |t_1-u|\ll |t_1+u|\ll E =X^{O(1)}.
\end{equation*}
 Recalling the definition of $g$ in \eqref{gDef}, we derive
 \begin{equation*}
     g(|t_1\pm u|)\ll \frac{\log X}{e^{h_0}} \log \log E.
 \end{equation*}
Since the definition of $\mathcal{A}_j$, we know that $|t_2-u|\geq |t_1-u|$, furthermore
\begin{equation*}
    \frac{e^{h_0}}{\log X}+ \frac{e^{h_1}}{\log X}\ll |t_2-u|= |t_1-u|+(|t_2-u|-|t_1-u|)\ll |t_2+u|\ll E=X^{O(1)},
\end{equation*}
which implies that
\begin{equation*}
    g(|t_2\pm u|)\ll \frac{\log X}{e^{\max(h_0,h_1) }} \log \log E.
\end{equation*}
Similarly, for any $1 \leq i \leq k$, we have
\begin{equation}\label{gtiu}
    g(|t_i\pm u|)\ll \frac{\log X}{e^{\max(h_0,h_1,\ldots, h_{i-1})} }\log \log E.
\end{equation}
Moreover, for any $1 \leq i < j \leq k$, we have
\begin{equation*}
    \sum^{j-1}_{s=i}(|t_{s+1}-u|-|t_s-u|) \leq |t_j-t_i| \leq |t_j+t_i|,
\end{equation*}
then
\begin{equation}\label{gtjti}
    g(|t_{j}\pm t_i|)\ll \frac{\log X }{e^{\max(h_i,\ldots, h_{j-1} ) }} \log \log E.
\end{equation}
Trivially, from the definition of $g$ in (\ref{gDef}), we get
\begin{equation}\label{g2ug2ti}
   g(|2u|)\ll \log X,\quad g(|2t_i|)\ll \log X,\,\,\,\,\text{for any}\,\,\,1\leq i \leq k.
\end{equation}
By (\ref{L(t,u)})--(\ref{g2ug2ti}), we calculate that
\begin{align}\label{L(t,u)1}
     & L(\mathbf{t},u) \nonumber\\
     \ll & X(\log X)^{\frac{(2m-k)^2+k}{4}+\frac{(2m-k)^2}{4}+\frac{2m-k}{2}+\frac{3k}{4}}(\log \log E)^{\frac{k(k-1)}{2}+k(2m-k)}\nonumber\\
     &\quad\quad\quad\quad\quad\quad\quad\times\bigg(\prod^{k-1}_{i=0}\frac{\log X}{e^{ \max(h_0,h_1,\ldots, h_{i}) }} \bigg)^{2m-k}
     \bigg(\prod^{k-1}_{i=1} \prod^{k}_{j=i+1}\frac{\log X}{e^{\max(h_i,\ldots, h_{j-1} ) } } \bigg) \nonumber\\
     = & X(\log X)^{m(2m+1)}(\log \log E)^{\frac{k(k-1)}{2}+k(2m-k)} \\ \nonumber
     &\quad\quad\quad\times\exp\Big( -(2m-k)\sum^{k-1}_{i=0}\max(h_0,h_1,\ldots, h_{i})-\sum^{k-1}_{i=1} \sum^{k}_{j=i+1}\max(h_i,\ldots, h_{j-1} )\Big).
\end{align}
The volume of the region $\mathcal{C}_{h_0,h_1, \cdots, h_{k-1}}$ is $\ll  E^k e^{h_0+h_1+\cdots+h_{k-1}} (\log X)^{-k}$, and $ |\mathcal{B}_{h_0}|\ll e^{h_0}/\log X$. Inserting (\ref{L(t,u)1}) into (\ref{Lintest}), we get
\begin{align}\label{case1}
       &  \sum_{\substack{1\leq h_0<H\\ 1\leq h_1, \ldots h_{k-1}\leq H}}  h_0^{2m-k} |\mathcal{B}_{h_0}|^{2m-k-1} \int\limits_{\mathcal{C}_{h_0,h_1, \cdots, h_{k-1}}} L(\mathbf{t},u) \,d u \,d\mathbf{t} \nonumber\\
    \ll & X(\log X)^{2m^2-m+1}E^k(\log \log E)^{\frac{k(k-1)}{2}+k(2m-k)}\sum_{\substack{1\leq h_0<H\\ 1\leq h_1, \ldots h_{k-1}\leq H}}  h_0^{2m-k} \nonumber\\
    &  \times \exp\Big( (2m-k-1)h_0+\sum^{k-1}_{i=0}h_i-(2m-k)\sum^{k-1}_{i=0}\max(h_0,h_1,\ldots, h_{i})-\sum^{k-1}_{i=1} \sum^{k}_{j=i+1}\max(h_i,\ldots, h_{j-1} )\Big) \nonumber\\
    = & X(\log X)^{2m^2-m+1}E^k(\log \log E)^{\frac{k(k-1)}{2}+k(2m-k)} \nonumber\\
    &  \times \sum_{\substack{1\leq h_0<H\\ 1\leq h_1, \ldots h_{k-1}\leq H}}  h_0^{2m-k}\exp\Big( -(2m-k)\sum^{k-1}_{i=1}\max(h_0,h_1,\ldots, h_{i})-\sum^{k-1}_{i=1} \sum^{k}_{j=i+2}\max(h_i,\ldots, h_{j-1} )\Big) \nonumber\\
    \ll &   X(\log X)^{2m^2-m+1}E^k(\log \log E)^{\frac{k(k-1)}{2}+k(2m-k)} \nonumber\\
    &\quad\quad\quad\times\sum_{\substack{1\leq h_0<H \\ 1\leq h_1, \ldots h_{k-1}\leq H}}  h_0^{2m-k}\exp\Big( -(2m-k)\sum^{k-1}_{i=1}\frac {\sum^{i}_{s=0}h_s}{i+1}-\sum^{k-1}_{i=1} \sum^{k}_{j=i+2}\frac {\sum^{j-1}_{s=i}h_s}{j-i}\Big)  \nonumber\\
    \ll &   X(\log X)^{2m^2-m+1}E^k(\log \log E)^{\frac{k(k-1)}{2}+k(2m-k)}.
\end{align}

\textbf{Case 2:} $h_0=H$

(1) $|u|> 5$.

For this case, we have $e^{h_0}\ll \log X$. Hence, for any $1 \leq i \leq k$, we get
\begin{equation}\label{gtiu1}
    g(|t_i\pm u|)\ll \log\log  E.
\end{equation}
Regarding the estimate of $g(|t_j\pm t_i|)$, it is the same as (\ref{gtjti}).  We estimate $g(|2t_i|)$ by the trivial bound, and since $|t_1-u| \leq |t_1|+|u|$, $|u| \geq 5$, we have
\begin{equation}\label{g2ug2ti1}
   g(|2u|) \ll \log\log  E,\quad\quad g(|2t_i|) \ll \log X,\,\,\text{for any}\,\,1\leq i\leq k.
\end{equation}
By (\ref{L(t,u)}), (\ref{gtjti}), (\ref{gtiu1}) and (\ref{g2ug2ti1}), we derive
\begin{align}\label{L(t,u)2}
     &L(\mathbf{t},u) \nonumber\\
     \ll & X(\log X)^{\frac{(2m-k)^2+k}{4}+\frac{3k}{4}}(\log \log E)^{\frac{(2m-k)^2}{4}+(2m-k)(k+1/2)+\frac{k(k-1)}{2}}
     \bigg(\prod^{k-1}_{i=1} \prod^{k}_{j=i+1}\frac{\log X}{e^{\max(h_i,\ldots, h_{j-1} ) } } \bigg) \nonumber\\
     = & X(\log X)^{\frac{(2m-k)^2}{4}+\frac{k(k+1)}{2}}(\log \log E)^{\frac{(2m-k)^2}{4}+(2m-k)(k+1/2)+\frac{k(k-1)}{2}}  \nonumber\\
&\quad\quad\quad\quad\quad\quad\quad\quad\quad\quad\quad\quad\quad\quad\quad\times\exp\Big( -\sum^{k-1}_{i=1} \sum^{k}_{j=i+1}\max(h_i,\ldots, h_{j-1} )\Big).
\end{align}
The volume of the region $\mathcal{C}_{H,h_1, \cdots, h_{k-1}}$ is
\begin{equation}\label{Vol}
    \ll E^{k+1} e^{h_1+\cdots+h_{k-1}} (\log X)^{-k+1}.
\end{equation}
As $|\mathcal{B}_H|\ll E$, we deduce from (\ref{Lintest}), (\ref{L(t,u)2}) and (\ref{Vol}) that
\begin{align}\label{case21}
     &\sum_{\substack{1\leq h_1, \ldots h_{k-1}\leq H}}  H^{2m-k} |\mathcal{B}_{H}|^{2m-k-1} \int\limits_{\substack{\mathcal{C}_{H,h_1, \cdots, h_{k-1}} \\ |u| \geq 5}} L(\mathbf{t},u) \,d u\, d \mathbf{t}  \nonumber\\
     \ll & X(\log X)^{\frac{(2m-k)^2}{4}+\frac{k(k-1)}{2}+1}E^{2m}(\log\log X)^{2m-k} (\log \log E)^{\frac{(2m-k)^2}{4}+(2m-k)(k+1/2)+\frac{k(k-1)}{2}}\nonumber\\
     & \hspace*{2cm} \times \sum_{\substack{1\leq h_1, \ldots h_{k-1}\leq H}} \exp\Big( \sum^{k-1}_{i=1}h_i-\sum^{k-1}_{i=1} \sum^{k}_{j=i+1}\max(h_i,\ldots, h_{j-1} )\Big) \nonumber\\
     \ll &   X(\log X)^{\frac{(2m-k)^2}{4}+\frac{k(k-1)}{2}+1}E^{2m}(\log\log X)^{2m-k} (\log \log E)^{\frac{(2m-k)^2}{4}+(2m-k)(k+1/2)+\frac{k(k-1)}{2}}.
\end{align}

(2) $|u|\leq 5$.
We estimate $g(|2u|)$ by the trivial bound, and since $e^{10}<|t_1-u| \leq |t_i-u| \leq |t_i|+|u|$, $|u|\leq 5$, for any $1 \leq i \leq k$, we have $|t_i|\geq 5$ and
\begin{equation}\label{g2ug2ti2}
   g(|2u|) \ll \log X,\quad\quad g(|2t_i|) \ll \log \log E.
\end{equation}
By (\ref{L(t,u)}), (\ref{gtjti}), (\ref{gtiu1}) and (\ref{g2ug2ti2}), we get
\begin{align}\label{L(t,u)3}
 &L(\mathbf{t},u) \nonumber\\
 \ll & X(\log X)^{\frac{(2m-k)^2+k}{4}+\frac{(2m-k)^2}{4}+\frac{2m-k}{2}}(\log \log E)^{(2m-k)k+\frac{k(k-1)}{2}+\frac{3k}{4}}
     \bigg(\prod^{k-1}_{i=1} \prod^{k}_{j=i+1}\frac{\log X}{e^{\max(h_i,\ldots, h_{j-1} ) } } \bigg) \nonumber\\
     = & X(\log X)^{m(2m+1)-k(2m+3/4-k)}(\log \log E)^{(2m-k)k+\frac{k(k-1)}{2}+\frac{3k}{4}}  \nonumber\\
&\quad\quad\quad\quad\quad\quad\quad\quad\quad\quad\quad\quad\quad\quad\quad\times\exp\Big( -\sum^{k-1}_{i=1} \sum^{k}_{j=i+1}\max(h_i,\ldots, h_{j-1} )\Big).
\end{align}

   As we have $|t_i| \geq 5$ for any $1 \leq i \leq k$ when $|u| \leq 5$, we see that the volume of the region $\mathcal{C}_{H,h_1, \cdots, h_{k-1}}$ is $\ll E^{k}$. It follows that
\begin{align}\label{case22}
 &\sum_{\substack{1\leq h_1, \ldots h_{k-1}\leq H}}  H^{2m-k} |\mathcal{B}_{H}|^{2m-k-1} \int\limits_{\substack{\mathcal{C}_{H,h_1, \cdots, h_{k-1}} \\ |u| \leq 5}} L(\mathbf{t},u)\, d u\, d \mathbf{t}  \nonumber\\
     \ll & X(\log X)^{m(2m+1)-k(2m+3/4-k)}E^{2m-1}(\log\log X)^{2m-k}(\log \log E)^{(2m-k)k+\frac{k(k-1)}{2}+\frac{3k}{4}} \nonumber\\
     & \hspace*{2cm} \times \sum_{\substack{1\leq h_1, \ldots h_{k-1}\leq H}} \exp\Big( -\sum^{k-1}_{i=1} \sum^{k}_{j=i+1}\max(h_i,\ldots, h_{j-1} )\Big) \nonumber\\
     \ll &   X(\log X)^{m(2m+1)-k(2m+3/4-k)}E^{2m-1}(\log\log X)^{2m-k}(\log \log E)^{(2m-k)k+\frac{k(k-1)}{2}+\frac{3k}{4}}.
\end{align}
From (\ref{Lintest}), (\ref{case1}), (\ref{case21}) and (\ref{case22}), we obtain
\begin{align}\label{lemma50}
    &T_m(E,X)\nonumber\\
    \ll& X\Bigg(
    (\log X)^{2m^2-m+1}E^k(\log \log E)^{\frac{k(k-1)}{2}+k(2m-k)}\nonumber\\
    &+(\log X)^{\frac{(2m-k)^2}{4}+\frac{k(k-1)}{2}+1}E^{2m}(\log\log X)^{2m-k} (\log \log E)^{\frac{(2m-k)^2}{4}+(2m-k)(k+1/2)+\frac{k(k-1)}{2}}\nonumber\\
    &\quad\quad+(\log X)^{m(2m+1)-k(2m+3/4-k)}E^{2m-1}(\log\log X)^{2m-k}(\log \log E)^{(2m-k)k+\frac{k(k-1)}{2}+\frac{3k}{4}}\Bigg).
\end{align}
We apply (\ref{lemma50}) to estimate (\ref{lemma51}) for any integer $k \geq 1$ and any real numbers $2m \geq k+1, \varepsilon>0$, then
\begin{align*}
  &\sum_{n\leq  \log X+1}  \frac{n^{2m-1+\varepsilon} }{e^{2nm} }\sum_{\substack{p \leq X \\ (p,2)=1}} \log p\bigg( \int_{e^{n-1}-1}^{e^{n}-1 } |L(1/2+it,\chi^{(8p)}) | dt \bigg)^{2m}
    \\
     \ll & X\sum_{n\leq  \log X+1} \frac{n^{2m-1+\varepsilon} }{e^{2nm} } \\
    & \times
     \Big(
    (\log X)^{2m^2-m+1}e^{kn}(\log \log E)^{\frac{k(k-1)}{2}+k(2m-k)}\nonumber\\
    &\,\,+(\log X)^{\frac{(2m-k)^2}{4}+\frac{k(k-1)}{2}+1}e^{2mn}(\log\log X)^{2m-k} (\log \log E)^{\frac{(2m-k)^2}{4}+(2m-k)(k+1/2)+\frac{k(k-1)}{2}}\nonumber\\
    &\quad\quad+(\log X)^{m(2m+1)-k(2m+3/4-k)}e^{(2m-1)n}(\log\log X)^{2m-k}(\log \log E)^{(2m-k)k+\frac{k(k-1)}{2}+\frac{3k}{4}}\Big)\nonumber\\
    &\ll X(\log X)^{\varepsilon}\Big(
    (\log X)^{2m^2-m+1}
    +(\log X)^{\frac{(2m-k)^2}{4}+\frac{k(k-1)}{2}+2m+1}\nonumber\\
    &\quad\quad\quad\quad\quad\quad\quad\quad+(\log X)^{m(2m+1)-k(2m+3/4-k)}\Big)\\
    &\ll X(\log X)^{R(m,k,\varepsilon)}.
\end{align*}
We complete the proof of Lemma \ref{fdiff}.
\end{proof}

\subsection{Proof of Proposition \ref{Sm2}}

We apply the Cauchy-Schwarz inequality to see that
\begin{align}\label{pocs1}
   S_m^{(2)}(X,Y)
    \leq  \bigg(\sum_{\substack{p \leq X \\ (p,2)=1}}\log p&\bigg|\sum_{n\leq Y} \chi^{(8p)}(n)\Big (1-\Phi_U \Big( \frac {n}{Y} \Big) \Big )\bigg|^{2}\bigg)^{\frac{1}{2}}\nonumber\\
    &\times\bigg(\sum_{\substack{p \leq X \\ (p,2)=1}}\log p\bigg|\sum_{n\leq Y} \chi^{(8p)}(n)\Big (1-\Phi_U \Big( \frac {n}{Y} \Big)\Big )\bigg|^{4m-2}\bigg)^{\frac{1}{2}}.
\end{align}
Using Lemma \ref{Heath} with $Z=Y$ and $Y \leq X$ to the first term of product (\ref{pocs1}), it yields that
\begin{align*}
  &\sum_{\substack{p \leq X \\ (p,2)=1}}\log p\bigg|\sum_{n\leq Y} \chi^{(8p)}(n) \Big (1-\Phi_U \Big( \frac {n}{Y} \Big)\Big ) \bigg|^{2} \ll \log X \sum_{\substack{d \leq X \\ (d,2)=1}}\bigg|\sum_{n\leq Y} \chi^{(8d)}(n) \Big (1-\Phi_U \Big( \frac {n}{Y} \Big)\Big ) \bigg|^{2}\\
  \ll & (XY)^{\varepsilon}(X+Y)\sum_{\substack{n_1, n_2 \leq Y \\ n_1n_2=\square} }\Big (1-\Phi_U \Big( \frac {n_1}{Y} \Big)\Big )\Big (1-\Phi_U \Big(\frac {n_2}{Y} \Big) \Big ) \\
  \ll &
  X^{1+\varepsilon}\sum_{\substack{ n_1, n_2 \in[1,Y/U]\cup [Y(1-1/U),Y] \\ n_1n_2=\square} }1 .
\end{align*}
  We write $n_1=d m^2_1, n_2=d m^2_2$ with $d$ square.  The above is
\begin{align}\label{pocs2}
  \ll& X^{1+\varepsilon}\sum_{\substack{d \leq Y}}\sum_{\substack{ m_1, m_2 \in[1,(Y/(dU))^{1/2}]\cup[(Y(1-1/U)/d)^{1/2}, (Y/d)^{1/2}]}}1 \nonumber\\
  \ll & X^{1+\varepsilon}\sum_{\substack{d \leq Y}} \Big(Y/(dU)+\Big((Y/d)^{1/2}-(Y(1-1/U)/d)^{1/2}\Big)^2 \Big)
\nonumber\\
   \ll & X^{1+\varepsilon}YU^{-1}\ll X^{1-2\varepsilon}Y
\end{align}
with $U=X^{3\varepsilon}$.
For the second term of product (\ref{pocs1}), we have the upper bound
\begin{align}\label{pocs3}
&\sum_{\substack{p \leq X \\ (p,2)=1}}\log p\bigg|\sum_{n\leq Y} \chi^{(8p)}(n)\Big (1-\Phi_U \Big( \frac {n}{Y} \Big)\Big )\bigg|^{4m-2} \nonumber\\
\ll &\sum_{\substack{p \leq X \\ (p,2)=1}}\log p\bigg|\sum_{n\leq Y} \chi^{(8p)}(n)\bigg|^{4m-2}+\sum_{\substack{p \leq X \\ (p,2)=1}}\log p\bigg|\sum_{n\leq Y} \chi^{(8p)}(n)\Phi_U \Big( \frac {n}{Y} \Big)\bigg|^{4m-2}.
\end{align}
Since $4m-2=2(2m-1)\geq 2$ for $m\geq 1$, we can use Proposition \ref{Sm1} with $k=1$. Therefore, the second term on the right-hand side of (\ref{pocs3}) is
\begin{align}\label{pocs4}
 S^{(2)}_{2m-1}(X,Y)
 \ll  XY^{2m-1}(\log X)^{O_{m,\varepsilon}(1)}.
\end{align}
 To estimate the first term on the right-hand side of \eqref{pocs3}, we apply Lemma \ref{perron} with $w_n=\chi^{(8p)}(n)$, then
\begin{align}\label{Pint}
    &\sum_{n\leq Y}\chi^{(8p)}(n)= \frac 1{ 2\pi i}\int_{1+1/\log Y-iY}^{1+1/\log Y+iY}L(s,\chi^{(8p)}) \frac{Y^s}{s} d s +O(\log Y)\nonumber\\
     = & \left(\frac 1{ 2\pi i}\int_{1/2-iY}^{1/2+iY} +\frac 1{ 2\pi i}\int_{1+1/\log Y -iY}^{1/2-iY}+\frac 1{ 2\pi i}\int_{1/2+iY}^{1+1/\log Y+iY} \right)L(s,\chi^{(8p)})\frac{Y^s}{s}d s +O(\log Y),
\end{align}
 Inserting (\ref{Pint}) back into the first term of the right side of \eqref{pocs3}, we get
 \begin{align}\label{pocs5}
     \sum_{\substack{p \leq X \\ (p,2)=1}}\log p\bigg|\sum_{n\leq Y} \chi^{(8p)}(n)\bigg|^{4m-2}
     \ll (S_1+S_2+S_3+X(\log X)^{4m-2})\log X,
 \end{align}
where
\begin{align*}
    S_1=\sum_{\substack{p\leq X \\ (p,2)=1}}\bigg| \int_{1/2-iY}^{1/2+iY} L(s,\chi^{(8p)})\frac{Y^s}{s} ds\bigg|^{4m-2},\quad
    S_2=\sum_{\substack{p \leq X \\ (p,2)=1}}\bigg| \int_{1+1/\log Y-iY}^{1/2-iY} L(s,\chi^{(8p)})\frac{Y^s}{s} ds\bigg|^{4m-2}
\end{align*}
and
\begin{align*}
    S_3=\sum_{\substack{p \leq X \\ (p,2)=1}}\bigg| \int_{1/2+iY}^{1+1/\log Y+iY} L(s,\chi^{(8p)})\frac{Y^s}{s} ds\bigg|^{4m-2}.
\end{align*}
The estimates of $S_2$ and $S_3$ are the same, we just estimate $S_3$.
We assume that $Y\geq 10$, otherwise the lemma is trivial.
Let $m \geq 1$, which allows us to apply H?lder's inequality to get
\begin{align}\label{S3}
 S_3\ll & \sum_{\substack{p \leq X \\ (p,2)=1}}\bigg( \int_{1/2+iY}^{1+1/\log Y+iY} |L(s,\chi^{(8p)})| |ds| \bigg)^{4m-2} \nonumber\\
 \ll &  \sum_{\substack{p \leq X \\ (p,2)=1}}\int_{1/2+iY}^{1+1/\log Y+iY} |L(s,\chi^{(8p)})|^{4m-2} |ds| \ll X(\log X)^{8m^2-6m+1}.
\end{align}
For the last bound of (\ref{S3}), we utilize Lemma \ref{rude} with $ 1/2 \leq \sigma \leq 1+1/\log Y$ under GRH.
  Next we bound $S_1$ using H?lder's inequality, Lemma \ref{fdiff} with $k=1$ and the assumption $Y \leq X$.  Thus
\begin{align}\label{S1}
  S_1
  \ll & Y^{2m-1}\sum_{\substack{p \leq X \\ (p,2)=1}} \bigg( \int_{0}^Y \frac{|L( \tfrac{1}{2}+it,\chi^{(8p)}) |}{t+1} d t \bigg)^{4m-2}  \nonumber\\
  \ll & Y^{2m-1}\sum_{\substack{p \leq X \\ (p,2)=1}} \bigg( \big(\sum_{n\leq \log Y+2}1\big)^{\frac{4m-3}{4m-2}}\bigg(\sum_{n\leq \log Y+2}\bigg(\int_{e^{n-1}-1}^{e^{n}-1 } \frac{|L( \tfrac{1}{2}+it,\chi^{(8p)}) |}{t+1} d t\bigg)^{4m-2}\bigg)^{\frac{1}{4m-2}}\bigg)^{4m-2}\nonumber\\
   \ll &  Y^{2m-1}\sum_{n\leq \log Y+2} \frac{n^{4m-2} }{e^{(4m-2)n}} \sum_{\substack{p \leq X \\ (p,2)=1}} \bigg( \int_{e^{n-1}-1}^{e^{n}-1 } |L( \tfrac{1}{2}+it,\chi^{(8p)}) | d t \bigg)^{4m-2} \nonumber\\
  \ll & Y^{2m-1}X(\log X)^{O_{m,\varepsilon}(1)}\Big ( \sum_{n\leq \log Y+2}\frac{n^{4m-2}}{e^{(4m-2)n} }e^n +\sum_{n\leq \log Y+2}n^{4m-2} \Big )\nonumber\\
  \ll & Y^{2m-1}X(\log X)^{O_{m,\varepsilon}(1)}.
\end{align}
From \eqref{pocs5}--\eqref{S1}, we deduce that
\begin{equation}
\label{pocs6}
   \sum_{\substack{p \leq X \\ (p,2)=1}}\log p\bigg|\sum_{n\leq Y} \chi^{(8p)}(n)\bigg|^{4m-2}\ll XY^{2m-1}(\log X)^{O_{m,\varepsilon}(1)}.
\end{equation}
Finally, by \eqref{pocs1}--\eqref{pocs4}, \eqref{pocs6} and $ Y \leq X$, we complete the proof of the Proposition \ref{Sm2}.

\vspace*{.5cm}

\noindent{\bf Acknowledgments.} The author is grateful to Professor Peng Gao for useful discussions and suggestions. During the work the author is supported by NSFC grant 11871082.

\end{document}